\newtheorem{theorem}{Theorem}[section]
\newtheorem{proposition}[theorem]{Proposition}
\newtheorem{lemma}[theorem]{Lemma}
\newtheorem{corollary}[theorem]{Corollary}
\newtheorem{fact}[theorem]{Fact}
\newtheorem{assumption}[theorem]{Assumption}
\theoremstyle{definition}
\theoremstyle{remark}
\newtheorem{remark}[theorem]{Remark}
\newtheorem{example}[theorem]{Example}
\numberwithin{equation}{section}
\def\Ind{\setbox0=\hbox{$x$}\kern\wd0\hbox to 0pt{\hss$\mid$\hss} \lower.9\ht0\hbox to 0pt{\hss$\smile$\hss}\kern\wd0} 
\def\Notind{\setbox0=\hbox{$x$}\kern\wd0\hbox to 0pt{\mathchardef \nn=12854\hss$\nn$\kern1.4\wd0\hss}\hbox to 0pt{\hss$\mid$\hss}\lower.9\ht0 \hbox to 0pt{\hss$\smile$\hss}\kern\wd0}
\def \d {\operatorname{def}}
\def \Autd {\operatorname{Aut}_{\operatorname{def}}}
\def \Hd {H^1_{\d}(\G,\Autd(G))}
\def \Aut {\operatorname{Aut}}
\def \G {\mathcal G}
\def \P {\mathfrak P}
\title{More on Galois cohomology,  definability \\and differential algebraic groups}
\author{Omar Le\'on S\'anchez}
\address{Omar Le\'on S\'anchez\\
University of Manchester\\
Department of Mathematics\\
Oxford Road \\
Manchester, M13 9PL.}
\email{omar.sanchez@manchester.ac.uk}
\author{David Meretzky}
\address{David Meretzky\\
University of Notre Dame\\
Department of Mathematics\\
255 Hurley, Notre Dame\\
IN 46556.}
\email{dmeretzk@nd.edu}
\author{Anand Pillay}
\address{Anand Pillay\\
University of Notre Dame\\
Department of Mathematics\\
255 Hurley, Notre Dame\\
IN, 46556.}
\email{apillay@nd.edu}
\thanks{Anand Pillay was supported by NSF grants DMS-1665035 and  DMS-1760212}
\date{\today}
\subjclass[2010]{03C60, 03C98, 12L12}
\keywords{model theory, atomic extensions, Galois cohomology}
\begin{document}

\maketitle

\begin{abstract}
As a continuation of the work of the third author in \cite{Pillay-cohomology}, we make further observations on the features of Galois cohomology in the general model theoretic context. We make explicit the connection between forms of definable groups and first cohomology sets with coefficients in a suitable automorphism group. We then use a method of twisting cohomology (inspired on Serre's algebraic twisting) to describe arbitrary fibres in cohomology sequences -- yielding a useful ``finiteness'' result on cohomology sets. 

Applied to the special case of differential fields and Kolchin's constrained cohomology, we complete results from \cite{LS-Pillay} by proving that the first constrained cohomology set of a differential algebraic group over a bounded, differentially large, field is countable. 
\end{abstract}

\section{introduction}

As in the third author's paper \cite{Pillay-cohomology}, which develops some basic features of definable cohomology, in this note we are mainly concerned with translations from the Galois cohomological language to the model-theoretic language of definable sets and atomic extensions. Nonetheless, the general statements here do yield new results in the differential cohomology context; that is, when specialized to Kolchin's constrained cohomology, see Example~\ref{examples}(ii), Remark~\ref{finalremark}(iii), and \S\ref{DGC}.

\medskip

We work in the (general)  model-theoretic context of \cite[\S2]{Pillay-cohomology}; namely: 

\begin{assumption}\label{assumptions} We fix a (first-order) structure $M$ and an arbitrary subset $A$ such that 
\begin{enumerate}
\item [(i)] $M$ is atomic over $A$, meaning that for every (finite) tuple $a$ from $M$ the type $tp(a/A)$ is isolated,
\item [(ii)] for any (finite) tuples $a$ and $b$ from $M$, if $tp(a/A)=tp(b/A)$, then there is $\sigma\in \Aut(M/A)$ such that $\sigma(a)=b$, and
\item [(iii)] the theory of $M$ eliminates imaginaries (alternatively one could work in $M^{\operatorname{eq}}$).
\end{enumerate}
\end{assumption}

\noindent {\bf Notation and conventions.} Throughout we let $\G$ denote the automorphism group $\Aut(M/A)$. We also let $G$ be an arbitrary (not necessarily abelian) group definable in $M$ with parameters from $A$. Note that $\G$ acts naturally on $G$ by group automorphisms. We make $\G$ into a topological group  by taking 
$$\{\text{Fix}(a)\;: \; a \text{ is a finite tuple from } M\}$$
as open neighbourhoods at the identity. We further equip $G$ with its discrete topology, thus making the natural action of $\G$ on $G$ continuous. 

\bigskip

We briefly recall the basic notions (and some results) on definable cohomology from \cite{Pillay-cohomology}. By a (1-)cocycle of $\G$ with values in $G$ (with respect to the natural action of $\G$ on $G$) we mean a crossed-homomorphism from $\G$ to $G$; namely, a map 
$$\Phi:\G \to G$$ such that 
$$\Phi(\sigma_1\cdot \sigma_2)=\Phi(\sigma_1)\cdot \sigma_1(\Phi(\sigma_2)).$$
A cocycle $\Phi$ is said to be definable if there is a (finite) tuple $a$ from $M$ and an $A$-definable (partial) function $h(x,y)$ such that for any $\sigma\in \G$ we have
$$\Phi(\sigma)=h(a,\sigma(a)).$$
One readily checks that definable cocycles are continuous. The set of definable cocycles (from $\G$ to $G$) is denoted by $Z^{1}_{\d}(\G,G)$. Two cocycles $\Phi$ and $\Psi$ are said to be cohomologous if there is $b\in G$ such that 
$$\Psi(\sigma)=b^{-1}\cdot \Phi(\sigma)\cdot \sigma(b), \quad \text{ for all }\sigma\in \G.$$
The relation of two cocycles being cohomologous is an equivalence relation on $Z^1_{\d}(\G,G)$. The equivalence classes are called cohomology classes. The equivalence class of the trivial cocycle (the one that maps each $\sigma\in \G$ to the identity of $G$) is denoted by $B^1_{\d}(\G,G)$ and its elements are called definable (1-)coboundaries from $\G$ to $G$. The set of all equivalence classes is called the 1st definable cohomology set of $\G$ with coefficients in $G$ (over $A$), and is denoted by $H^1_{\d}(\G,G)$. 

\begin{remark}
\begin{enumerate}
\item When $M=\operatorname{dcl}(A)$ the cohomology $H^1_{\d}(\G,G)$ is trivial.
\item In general, the first cohomology set $H^1_{\d}(\G,G)$ has the structure of a \emph{pointed set} with distinguished element being $B^1_{\d}(\G,G)$. When $G$ is abelian, the set of definable cocyles $Z^1_{\d}(\G,G)$ has a natural structure of an abelian group (by point-wise multiplication) with $B^{1}_{\d}(\G,G)$ a subgroup; and thus, in this case, $H^1_{\d}(\G,G)$ inherits a group structure given by the factor group $Z^1_{\d}(\G,G)/B^1_{\d}(\G,G)$.
\end{enumerate}
\end{remark}

\begin{example}\label{examples} \
\begin{enumerate}
\item [(i)] \emph{Algebraic Galois cohomology.} In the case when $A=k$ is a perfect field and $M=k^{\text{alg}}$ is the field-theoretic algebraic closure of $A$, the group $G$ is simply the $k^{\text{alg}}$-rational points of an algebraic group over $k$ and, by \cite[Lemma 2.6]{Pillay-cohomology}, we are in the classical situation of (algebraic) Galois cohomology. More precisely, the definable cohomology $H^1_{\d}(\G,G)$ agrees with the Galois cohomology over $k$ with coefficients in $G$ usually denoted $H^1(k,G)$. Most of our statements are inspired from well known results in this context, see for instance \cite{PR} or \cite{Serrebook}.
\smallskip
\item [(ii)] \emph{Differential constrained cohomology.} In the case when $A$ is a differential field of characteristic zero in finitely many commuting derivations $(k,\Delta)$ and $M$ is a differential closure $(k^{\text{diff}},\Delta)$, by \cite{Pillay-groups}, the definable group $G$ is simply the $k^{\text{diff}}$-rational points of a differential algebraic group over $k$. Also, by \cite[\S 3]{Pillay-cohomology}, the definable cohomology $H^1_{\d}(\G,G)$ coincides with Kolchin's constrained cohomology $H^1_{\Delta}(k,G)$ from  \cite[Chapter VII]{Kolchinbook2}. It is worth noting that, while the results of the current paper are well known in classical Galois cohomology, they are as a matter of fact novel in the context of differential constrained cohomology (they do not appear in \cite{Kolchinbook2} or elsewhere to the authors' knowledge).
\end{enumerate}
\end{example}

We recall that an $A$-definable (right) principal homogeneous space for $G$ is an $A$-definable set $X$ together with an $A$-definable regular (also called strictly transitive) right $G$-action. Two such spaces are said to be $A$-definably isomorphic if there is an $A$-definable bijection preserving the $G$-actions. The collection of $A$-definable principal homogeneous spaces, up to $A$-definable isomorphism, is a pointed set (with $G$ as distinguished element) denoted by $P_{\d}(G)$. The main result of \cite{Pillay-cohomology} establishes that as pointed sets $P_{\d}(G)$ and $H^1_{\d}(\G,G)$ are naturally isomorphic.

\medskip

Letting $\Aut_{\d}(G)$ denote the set of ($M$-)definable group automorphisms of $G$, in Section~\ref{formcoho} of this note, we prove that the definable cohomology set $\Hd$ (see Section~\ref{formcoho} for details) is isomorphic, as a pointed set, to the collection of $A$-forms of $G$ up to $A$-definable isomorphism. This result is then used in Section~\ref{finalsection}.

\medskip

Then, in Section~\ref{onsequence}, we are explicit on the construction of exact sequences in definable cohomology from (normal) short exact sequences for $G$. We further discuss the method of twisting definable cohomology by inner automorphisms, in the spirit of Serre's algebraic twisting (cf. \cite[Chapter I, \S5.3]{Serrebook} and \cite{SerreBour}), and utilise it to describe arbitrary fibres of the maps in the (original) cohomology sequence. Recall that our cohomologies are just pointed sets, not necessarily groups, thus the kernel of a morphism does not give us information about the other fibres. 

\medskip

In Section~\ref{finalsection}, we put together the aforementioned results to show that if $N$ is a normal $A$-definable subgroup of $G$ and $\mu\in H^{1}_{\d}(\G,G)$, then there is a \emph{surjection} from $H^{1}_{\d}(\G,N_\mu)$, where $N_\mu$ is the $A$-form of $N$ corresponding to the cohomology class in $H^1_{\d}(\G, \Aut_{\d}(N))$ induced by conjugation of $N$ by $\mu$, to the fibre
$$\P(\mu):= (\pi^1)^{-1}(\pi^1(\mu))$$
where $\pi^1:H^{1}_{\d}(\G,G)\to H^{1}_{\d}(\G,G/N)$ is the induced morphism (of pointed sets) from the canonical projection $\pi:G\to G/N$. As an application/corollary, we prove that if $H_{\d}(\G,G/N)$ is finite and  $H^1_{\d}(\G, N_{\mu})$ is also finite, for all $\mu\in H^{1}_{\d}(\G,G)$, then $H^1_{\d}(\G, G)$ is finite as well. 

\medskip

The corollary above, in the special case of Example 1.3 (ii) above,  is used in \cite[Lemma 2.6]{LS-Pillay}. A sketch of the argument is pointed out there, in the differential case, with details left to the reader.  One of the points of the current paper is to give  a detailed account in the more general model theoretic environment.  In fact, as we point out in Section 5,  one obtains also that  if $H_{\d}(\G,G/N)$ has cardinality $\leq \kappa$ and  $H^1_{\d}(\G, N_{\mu})$ has cardinality $\leq \kappa$, for all $\mu\in H^{1}_{\d}(\G,G)$, then $H^1_{\d}(\G, G)$ has cardinality $\leq\kappa$ too.   In Section 5, we apply this back in the differential context to prove the {\em countability} of $H^{1}_{\Delta}(k,G)$ whenever $k$ is a bounded, differentially large, field and $G$ is an arbitrary differential algebraic group over $k$.

\section{Definable cohomology and $A$-forms of $G$}\label{formcoho}

We carry on the notation and assumptions from the previous section; in particular, those on $M$, $A$, $\G$ and $G$. We let $\Autd(G)$ denote the set of definable (with parameters in $M$) group automorphisms of $G$. Note that this is a group -- with respect to composition. Also note that the automorphism group $\G$ acts naturally on $\Autd(G)$; namely, if $\sigma\in \G$ and $\phi\in\Autd(G)$, then $\sigma(\phi)$ is obtained by applying $\sigma$ to the graph of $\phi$ (a definable set), which yields another definable automorphism of $G$. Furthermore, one readily checks that for $\phi_1,\phi_2\in \Autd(G)$ we have
\begin{equation}\label{compatible}
\sigma(\phi_1\cdot \phi_2)=\sigma(\phi_1)\cdot\sigma(\phi_2).
\end{equation}
In other words, the action of $\G$ in $\Autd(G)$ is by group automorphisms. Furthermore, equipping $\Autd(G)$ with its discrete topology, elimination of imaginaries implies that this action is continuous.

\medskip

Using the action of $\G$ on $\Autd(G)$, we define a \emph{(1-)cocycle} from $\G$ to $\Autd(G)$ to be a map 
$$\Phi:\G\to \Autd(G)$$
such that 
$$\Phi(\sigma_1\cdot \sigma_2)=\Phi(\sigma_1)\cdot \sigma_1(\Phi(\sigma_2)).$$
The trivial group homomorphism from $\G$ to $\Autd(G)$ is a cocycle, called the \emph{trivial} cocycle. Two cocycles $\Phi$ and $\Psi$ from $\G$ to $\Autd(G)$ are said to be cohomologous if there exist $\phi\in \Autd(G)$ such that 
$$\Psi(\sigma)=\phi^{-1}\cdot \Phi(\sigma)\cdot \sigma(\phi) \quad \textrm{ for all } \sigma\in \G.$$
Being cohomologous is an equivalence relation: transitivity follows by \eqref{compatible}. We say that the cocycle $\Phi$ is \emph{definable} if if there exists a tuple $a$ from $M$ and an $A$-definable (partial) function $h(x,y,z)$ such that
$$\Phi(\sigma)(-)=h(a,\sigma(a), -) \quad \textrm{ for all } \sigma\in \G.$$
We note that the trivial cocycle is definable, and if $\Phi$ is a cocycle that is cohomologous to a definable cocycle then $\Phi$ is itself definable. Furthermore, one readily verifies that definable cocycles are continuous. The set of definable cocycles is denoted by $Z^1_{\d}(\G,\Autd(G))$. The set of cohomology classes of $Z^1_{\d}(\G,\Autd(G))$ (under the equivalence relation of being cohomologous) is the 1st definable cohomology of $\G$ with coefficients in $\Autd(G)$ and is denoted by $H^1_{\d}(\G,\Autd(G))$.

\begin{remark}
As in the case of the definable cohomology $H^1_{\d}(\G,G)$, the cohomology $\Hd$ has the structure of a pointed set, with the cohomology class of the trivial cocyle (also called the set of \emph{coboundaries} from $\G$ to $\Autd(G)$) being the distinguished element. Further, in the case when the group $\Autd(G)$ is abelian, $\Hd$ has the structure of an abelian group. This follows from the fact that, in this case, the natural (commutative) group structure on the set of maps from $\G$ to $\Autd(G)$ given by 
$$\Psi_1* \Psi_1(\sigma)=\Psi_1(\sigma)\cdot \Psi_2(\sigma)$$
restricts to a (commutative) group structure on $Z^1_{\d}(\G,\Autd(G))$.
\end{remark}

\medskip

By an \emph{$A$-form of $G$} we mean an  $A$-definable group $H$ which is definably (over $M$) isomorphic to $G$. Let $H$ be an  $A$-form of $G$. Note that, for any definable (group) isomorphism $f:G\to H$ and $\sigma\in \G$, the map $\sigma(f):G\to H$ is again a definable isomorphism. Thus, we have a well-defined map $\Phi_{H,f}:\G\to \Autd(G)$ given by
$$\Phi_{H,f}(\sigma)=f^{-1}\circ \sigma(f)$$
Furthermore, $\Phi_{H,f}$ is a definable cocycle. It is clearly definable and
\begin{align*}
\Phi_{H,f}(\sigma_1\cdot\sigma_2) &=f^{-1}\sigma_1(\sigma_2(f)) \\
&=f^{-1}\sigma_1(ff^{-1}\sigma_2(f)) \\
&=f^{-1}\sigma_1(f)\sigma_1(f^{-1}\sigma_2(f)) \\
&=\Phi_{H,f}(\sigma_1)\cdot \sigma_1(\Phi_{H,f}(\sigma_2))
\end{align*} 
Now, if $g:G\to H$ is another definable isomorphism, then $\Phi_{H,f}$ and $\Phi_{H,g}$ are cohomologous. Indeed, 
$$\Phi_{H,g}(\sigma)=(f^{-1}g)^{-1}\Phi_{H,f}(\sigma)\sigma(f^{-1}g).$$
More generally, if $N$ is another $A$-form of $G$ which is (group) definably isomorphic to $H$ over $A$, say witnessed by $\eta:N\to H$, and $p:G\to N$ is a definable isomorphism, then $\Phi_{H,f}$ and $\Phi_{N,p}$ are cohomologous. Indeed
$$\Phi_{N,p}(\sigma)=(f^{-1}\, \eta\,  p)^{-1}\Phi_{H,f}(\sigma)\sigma(f^{-1}\, \eta\,  p).$$

Let $\mathfrak {F}_{\d}(G)$ denote the set of $A$-forms of $G$ up to $A$-definable isomorphism. The above discussion yields a well-defined map 
$$\mathfrak{F}_{\d}(G)\to \Hd$$
given by mapping $(H,f)$ to the cohomology class of $\Phi_{H,f}$, where $H$ an $A$-form of $G$ and $f:G\to H$ a definable (over $M$) isomorphism. We now show that the displayed map is an isomorphism of pointed sets. The distinguished element in $\mathfrak{F}_{\d}(G)$ being $G$. Clearly, $\Phi_{G,\operatorname{Id}}$ is the trivial cocycle from $\G$ to $\Autd(G)$, hence $(G,\operatorname{Id})$ maps to the cohomology class of the trivial cocycle (i.e., the set of coboundaries). The remaining of this section is devoted to showing that this map is a bijection. 

\begin{remark}\label{setdef}
It follows from our assumptions (see Assumptions~\ref{assumptions}) that a definable set $X$ is $A$-definable if and only $X$ is fixed setwise by any element of $\G$. We will use this to show injectivity of the above map.
\end{remark}

Let us show injectivity of $\mathfrak{F}_{\d}(G)\to \Hd$. Take $(H,f)$ and $(N,p)$ where $H$ and $N$ are $A$-forms of $G$ and $f:G\to H$ and $p:G\to N$ are definable isomorphisms. Assume $\Phi_{H,f}$ is cohomologous to $\Phi_{N,p}$, then there exists $\phi\in \Autd(G)$ such that  for all $\sigma\in \G$
$$\Phi_{N,p}(\sigma)=\phi^{-1}\Phi_{H,f}(\sigma)\,\sigma(\phi).$$ 
In other words, 
$$p^{-1}\sigma(p)=\phi^{-1}f^{-1}\sigma(f) \sigma(\phi),$$
rearranging we get
 $$f\phi p^{-1}=\sigma(f\phi p^{-1})$$
It follows, see Remark~\ref{setdef}, that the isomorphism $f\phi p^{-1}:N\to H$ is $A$-definable. Thus $N$ and $H$ are identified in $\mathfrak{F}_{\d}(G)$, showing the map is indeed injective. 

\medskip

We now show surjectivity. Let $\Phi\in Z^1_{\d}(\G,\Autd(G))$. Then there is tuple $a$ from $M$ and an $A$-definable (partial) function $h(x,y,z)$ such that $\Phi(\sigma)(-)=h(a,\sigma(a),-)$ for all $\sigma\in \G$. 

\begin{lemma}\label{prop}
Let $a$ and $h$ be as above. For any $b, c, d$ tuples from $M$ realising $tp(a/A)$ we have that
$h(b,c,-)\in\Autd(G)$. Furthermore, 
$$h(b,c,-)=h(b,d,h(d,c,-)).$$
It follows that $h(b,b,-)=\operatorname{Id}_G$ and $h(b,c,-)^{-1}=h(c,b,-).$
\end{lemma}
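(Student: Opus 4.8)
The plan is to exploit the two defining properties of a definable cocycle $\Phi$ obtained from $(a,h)$: first that $\Phi(\sigma)(-)=h(a,\sigma(a),-)$ genuinely lands in $\Autd(G)$ for every $\sigma$, and second the cocycle identity $\Phi(\sigma_1\sigma_2)=\Phi(\sigma_1)\cdot\sigma_1(\Phi(\sigma_2))$. The key observation is that all the assertions in the lemma are expressible by first-order formulas over $A$ in the tuple $(b,c,d)$ realising (a finite power of) $tp(a/A)$; since these formulas hold for $(a,\sigma(a),\tau(a))$ as $\sigma,\tau$ range over $\G$, and since by Assumption~\ref{assumptions}(ii) together with the isolation of $tp(a/A)$ every realisation of the relevant type is of this form (conjugate to $a$ over $A$), the formulas hold for all such $(b,c,d)$.

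First I would make precise what ``$b,c,d$ realise $tp(a/A)$'' should mean for the compound statement: really one wants the triple $(b,c,d)$ (or pairs thereof) to realise $tp(a,\sigma(a),\tau(a)/A)$ for suitable $\sigma,\tau$, i.e. to lie in a single $\G$-orbit together with $a$. Because $M$ is atomic over $A$, the type of any finite tuple over $A$ is isolated; and by Assumption~\ref{assumptions}(ii), if $(b,c,d)$ and $(a,\sigma(a),\tau(a))$ have the same type over $A$ there is $\rho\in\G$ carrying one to the other. So it suffices to prove each identity for $(b,c,d)=(\rho(a),\rho\sigma(a),\rho\tau(a))$ and then transport. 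Concretely, for the statement ``$h(b,b,-)=\operatorname{Id}_G$'' one applies the cocycle identity with $\sigma_1=\sigma_2=\operatorname{id}$ to get $\Phi(\operatorname{id})=\Phi(\operatorname{id})\cdot\Phi(\operatorname{id})$, hence $h(a,a,-)=\operatorname{Id}_G$ in $\Autd(G)$; applying $\rho\in\G$ to the (graph of this) definable statement yields $h(\rho(a),\rho(a),-)=\operatorname{Id}_G$, and since $\rho(a)$ ranges over all realisations of $tp(a/A)$ this gives the claim for arbitrary $b$.

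For the composition law $h(b,c,-)=h(b,d,h(d,c,-))$, I would start from the cocycle identity in the form $\Phi(\sigma_1\sigma_2)=\Phi(\sigma_1)\circ\sigma_1(\Phi(\sigma_2))$ and compute $\sigma_1(\Phi(\sigma_2))$: since $\Phi(\sigma_2)(-)=h(a,\sigma_2(a),-)$ and $h$ is $A$-definable, applying $\sigma_1$ to its graph gives $\sigma_1(\Phi(\sigma_2))(-)=h(\sigma_1(a),\sigma_1\sigma_2(a),-)$. Substituting yields
$$h(a,\sigma_1\sigma_2(a),-)=h(a,\sigma_1(a),h(\sigma_1(a),\sigma_1\sigma_2(a),-)).$$
Setting $\sigma=\sigma_1$ and $\tau=\sigma_1\sigma_2$ (so $\sigma_2=\sigma^{-1}\tau$, which is some element of $\G$), this reads $h(a,\tau(a),-)=h(a,\sigma(a),h(\sigma(a),\tau(a),-))$ for all $\sigma,\tau\in\G$. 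Now apply an arbitrary $\rho\in\G$: the displayed equation of definable objects is preserved, giving the identity for $(\rho(a),\rho\sigma(a),\rho\tau(a))$. By atomicity and Assumption~\ref{assumptions}(ii) every triple $(b,c,d)$ with $tp(b,c,d/A)=tp(a,\sigma(a),\tau(a)/A)$ for some $\sigma,\tau$ — in particular, I will argue, every triple of realisations of $tp(a/A)$, since $\G$ acts transitively on realisations and $\sigma,\tau$ were arbitrary — arises this way. Finally $h(b,c,-)\in\Autd(G)$ follows the same way from the fact that $\Phi(\sigma)\in\Autd(G)$ for all $\sigma$, and $h(b,c,-)^{-1}=h(c,b,-)$ is immediate from the composition law with $d:=b$ together with $h(b,b,-)=\operatorname{Id}_G$.

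The main obstacle is the bookkeeping around types: one must check that the ``two-sided'' freedom in choosing $b,c,d$ is genuinely captured by $\G$-conjugates of $(a,\sigma(a),\tau(a))$ as $\sigma,\tau$ vary independently, i.e. that asserting $tp(b/A)=tp(c/A)=tp(d/A)=tp(a/A)$ already forces $(b,c,d)$ into the common orbit described above — this is exactly where one invokes transitivity of $\G$ on type-realisations (Assumption~\ref{assumptions}(ii)) applied coordinatewise, lifting $b$ to a conjugate of $a$ first and then matching $c,d$. Everything else is a routine translation between the cocycle identity for $\Phi$ and functional equations for $h$, using that $\sigma$ acts on an $A$-definable function's graph by permuting the first-coordinate parameters and that $A$-definable objects are $\G$-invariant (Remark~\ref{setdef}).
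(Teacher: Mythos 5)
Your proposal is correct and follows essentially the same route as the paper: write the tuples $b,d,c$ as $\sigma(a),\sigma\tau(a),\sigma\tau\eta(a)$ using homogeneity (Assumption~\ref{assumptions}(ii)), apply the cocycle identity $\Phi(\tau\eta)=\Phi(\tau)\tau(\Phi(\eta))$, and use that the $A$-definable function $h$ commutes with the $\G$-action; the remaining identities then follow formally, exactly as you indicate. The only difference is organizational (you prove the identity at $(a,\sigma(a),\tau(a))$ and transport by $\rho$, while the paper substitutes directly), and your type-bookkeeping concern is resolved the same way the paper resolves it.
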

\begin{proof}
Let $\sigma,\tau,\eta\in \G$ be such that 
$$\sigma(a)=b,\;  \sigma\tau\eta(a)=c\;  \textrm{ and }\; \sigma\tau(a)=d.$$ 
Then
\begin{align*}
h(b,c,-)&=h(\sigma(a), \sigma\tau\eta(a),-) \\
&=\sigma(\Phi(\tau\eta))(-) \\
&=\sigma(\Phi(\tau)\tau(\Phi(\eta)))(-) \\
&=h(\sigma(a),\sigma\tau(a), h(\sigma\tau(a),\sigma\tau\eta(a),-)) \\
&=h(b,d,h(d,c,-)).
\end{align*}
\end{proof}

Now let $Y$ be the set of realisations of $tp(a/A)$ in $M$. Then $Y$ is an $A$-definable set. Set $Z=Y\times G$. Define a relation $R$ on $Z$ given by 
$$(b,d)\,R\, (c,e) \; \iff \; h(b,c,e)=d$$
It follows, using Lemma~\ref{prop}, that $R$ is an equivalence relation (which is of course $A$-definable). Let $H=Z/R$. Then, by elimination of imaginaries, we identify the quotient $H$ with an $A$-definable set. We now equip $H$ with a $A$-definable group structure. Define, for $(b,d), (c,e)\in Z$,
$$(b,d)*(c,e)=(b,d\cdot h(b,c,e))\in Z$$
Using Lemma~\ref{prop}, one readily checks that this binary operation $*$ on $Z$ is $R$-invariant and thus induces a $A$-definable binary operation on $H$ that we also denote by $*$. We show that this is in fact a group structure on $H$. We prove this by showing that there is a definable (over $M$) bijection $f$ between $H$ and $G$ such that the induced group structure on $H$ is precisely $*$. Note that this further shows that $H$ is a $A$-form of $G$.

Let $f:Z\to G$ be defined by $f(b,d)=h(a,b,d)$. Since $h(a,b,-)\in \Autd(G)$, the function $f$ is clearly surjective. Furthermore, again using Lemma~\ref{prop}, one checks that
$$f(a,b,d)=f(a,c,e) \; \iff \; h(b,c,e)=d \; \iff\; (b,d) \, R\, (c,e).$$
Thus $f$ induces a definable (over $M$) bijection from $H=Z/R$ to $G$, which we also denote by $f$. Now, using Lemma~\ref{prop}, the induced group structure on $H$ via this bijection is 
\begin{align*}
f^{-1}(f(b,d)\cdot f(c,e)) &=f^{-1}(h(a,b,d)\cdot h(a,c,e)) \\
&=f^{-1}(h(a,b,d)\cdot h(a,b,h(b,c,e))) \\
&=f^{-1}(h(a,b,d\cdot h(b,c,e)) \\
&=f^{-1}(f(b,d\cdot h(b,c,e)))  \\
&=(b,d\cdot h(b,c,e)) \\
&=(b,d)*(c,e).
\end{align*}
We have thus shown that $(H,*)$ is a $A$-form of $G$ (witnessed by $f^{-1}:G\to H$). We claim that $(H,f^{-1})$ maps to $\Phi$ (namely $\Phi=\Phi_{H,f^{-1}}$). Indeed, for any $\sigma\in \G$, using once again Lemma~\ref{prop}, we have
$$\Phi(\sigma)\sigma(f)(b,d)=h(a,\sigma(a), \sigma(f)(b,d))=h(a,\sigma(a), h(\sigma(a), b,d))=h(a,b,d)=f(b,d).$$
and so 
$$\Phi(\sigma)=f\cdot \sigma(f^{-1})$$
this shows that $\Phi=\Phi_{H,f^{-1}}$, as desired.

\medskip

We have thus shown:

\begin{proposition}\label{isoform}
There is a natural isomorphism (as pointed sets) between $\mathfrak{F}_{\d}(G)$ and $\Hd$.
\end{proposition}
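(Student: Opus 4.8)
The plan is to exhibit the map $\mathfrak{F}_{\d}(G)\to \Hd$ of the preceding discussion, sending $(H,f)$ to the class of $\Phi_{H,f}$, and to verify it is a well-defined bijection of pointed sets; in fact all the ingredients have already been assembled in the text preceding the statement, so the proof consists in collecting them in the right order.

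\begin{proof}
Consider the map $\mathfrak{F}_{\d}(G)\to \Hd$ described above, sending the class of a pair $(H,f)$ (with $H$ an $A$-form of $G$ and $f\colon G\to H$ a definable isomorphism over $M$) to the cohomology class of $\Phi_{H,f}(\sigma)=f^{-1}\circ\sigma(f)$. We first note this is well defined: the computation displayed before the statement shows $\Phi_{H,f}\in Z^1_{\d}(\G,\Autd(G))$, and the two subsequent identities show that replacing $f$ by another isomorphism $g\colon G\to H$, or replacing $(H,f)$ by an $A$-definably isomorphic form $(N,p)$ via $\eta\colon N\to H$, changes $\Phi_{H,f}$ only by a cohomologous cocycle (the coboundary is $f^{-1}g$, respectively $f^{-1}\eta p$). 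Since $\Phi_{G,\operatorname{Id}}$ is the trivial cocycle, the distinguished point $G$ of $\mathfrak{F}_{\d}(G)$ maps to the distinguished point of $\Hd$, so the map is a morphism of pointed sets.

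For injectivity, suppose $\Phi_{H,f}$ and $\Phi_{N,p}$ are cohomologous, say $\Phi_{N,p}(\sigma)=\phi^{-1}\Phi_{H,f}(\sigma)\sigma(\phi)$ for some $\phi\in\Autd(G)$. Writing this out as $p^{-1}\sigma(p)=\phi^{-1}f^{-1}\sigma(f)\sigma(\phi)$ and rearranging gives $f\phi p^{-1}=\sigma(f\phi p^{-1})$ for all $\sigma\in\G$. By Remark~\ref{setdef}, the definable isomorphism $f\phi p^{-1}\colon N\to H$ is then $A$-definable, so $H$ and $N$ represent the same class in $\mathfrak{F}_{\d}(G)$.

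For surjectivity, let $\Phi\in Z^1_{\d}(\G,\Autd(G))$, written as $\Phi(\sigma)(-)=h(a,\sigma(a),-)$ for a tuple $a$ from $M$ and an $A$-definable partial function $h$. Lemma~\ref{prop} gives the ``cocycle-like'' identities for $h$ on realisations of $tp(a/A)$. Taking $Y$ to be the set of realisations of $tp(a/A)$ in $M$ (an $A$-definable set), forming $Z=Y\times G$, defining the $A$-definable equivalence relation $(b,d)\,R\,(c,e)\iff h(b,c,e)=d$, and passing to $H=Z/R$ (an $A$-definable set by elimination of imaginaries), one equips $H$ with the $A$-definable operation induced by $(b,d)*(c,e)=(b,d\cdot h(b,c,e))$; Lemma~\ref{prop} ensures $*$ is $R$-invariant. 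The map $f\colon Z\to G$, $f(b,d)=h(a,b,d)$, descends to a definable (over $M$) bijection $H\to G$ transporting the group structure of $G$ exactly to $*$, so $H$ is an $A$-form of $G$ and $f^{-1}\colon G\to H$ is a definable isomorphism. A final application of Lemma~\ref{prop}, computing $\Phi(\sigma)\sigma(f)(b,d)=f(b,d)$, yields $\Phi(\sigma)=f\circ\sigma(f^{-1})=\Phi_{H,f^{-1}}(\sigma)$, so $(H,f^{-1})$ maps to the class of $\Phi$. Naturality of the identification is clear from the construction. This completes the proof.
\end{proof}

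Since every step is already carried out in the discussion above, the only real point requiring care—and the step I would flag as the crux—is checking that the operation $*$ on $Z$ descends through $R$ and that the transported structure $f$ induces on $H$ really equals $*$ rather than merely some group structure; both reduce to the functional equation $h(b,c,-)=h(b,d,h(d,c,-))$ of Lemma~\ref{prop}, together with the fact that elimination of imaginaries lets us treat the quotient $Z/R$ as an honest $A$-definable set.
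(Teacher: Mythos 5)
Your proposal is correct and follows the paper's own argument essentially verbatim: the same map $(H,f)\mapsto[\Phi_{H,f}]$, the same injectivity argument via Remark~\ref{setdef}, and the same surjectivity construction of $H=(Y\times G)/R$ using the functional equation of Lemma~\ref{prop}. You have also correctly identified the crux, namely that $*$ descends through $R$ and agrees with the transported group structure, both of which reduce to $h(b,c,-)=h(b,d,h(d,c,-))$.
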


\section{Exact sequences and twisting in definable cohomology}\label{onsequence}

We carry forward the conventions and assumptions on $M$, $A$, $\G$ and $G$ from previous sections. Let $X$ be an $A$-definable set in the structure $M$. Then $\G$ acts naturally on $X$. The 0th definable cohomology from $\G$ to $X$, denoted $H^0_{\d}(\G,X)$, is the set of $\G$-invariant points of $X$. By our assumptions (see Assumptions~\ref{assumptions}), this coincides with the points of $X$ whose entries are all in $\operatorname{dcl}(A)$. In the case when $X$ has in addition the structure of an $A$-definable pointed set (i.e., has a distinguished point which is $\G$-invariant and thus in $\operatorname{dcl}(A)$), then $H^0_{\d}(\G,X)$ inherits the structure of a pointed set sharing its distinguished element with $X$. Furthermore, when $X=G$ (an $A$-definable group), the 0th definable cohomology $H^0_{\d}(\G,G)$ is clearly a subgroup of $G$.

Now consider an $A$-definable map $f:X\to X'$ of $A$-definable sets. Since $f$ preserves $\G$-invariants, it induces a map in cohomology
$$f^0:H^0_{\d}(\G,X)\to H^0_{\d}(\G,X').$$
In the case that $X$ and $X'$ have in addition the structure of $A$-definable pointed sets (the distinguished elements being $\G$-invariant) and $f$ is a homomorphism of pointed sets (i.e., maps the distinguished element of $X$ to that of $X'$), the above map $f^0$ is a homomorphism of pointed sets. Furthermore, when $X=G$ and $X'=G'$ are $A$-definable groups and $f$ is a group homomorphism, the map 
$$f^0:H^0_{\d}(\G,G)\to H^0_{\d}(\G,G')$$ 
is a group homomorphism.

Recall, from the introduction, that $Z^1_{\d}(\G,G)$ and $H^1_{\d}(\G,G)$ denote the sets of definable cocycles and cohomology from $\G$ to $G$, respectively. Let $f:G\to G'$ be an $A$-definable group homomorphism (between the $A$-definable groups $G$ and $G'$). For any definable 1-cocycle $\Phi\in Z^1_{\d}(\G,G)$, the composition $f\circ \Phi:\G\to  G'$ is again a definable 1-cocycle. Furthermore, if $\Psi$ is cohomologous to $\Phi$, then $f\circ\Phi$ is cohomologous to $f\circ \Psi$ as elements of $Z^1_{\d}(\G,G')$. Indeed, suppose $b\in G$ is such that $\Phi(\sigma)=b^{-1}\Psi(\sigma)\sigma(b)$ for all $\sigma\in \G$, then one easily verifies
$$(f\circ \Psi)(\sigma)=(f(b))^{-1}\, (f\circ \Psi)(\sigma)\, \sigma(f(b)).$$
Thus, the asssigment $\Psi\mapsto (f\circ \Phi)$ defines a map from $Z^1_{\d}(\G,G)$ to $Z^1_{\d}(\G,G')$ that induces a map in cohomology
$$f^1:H^1_{\d}(\G,G)\to H^1_{\d}(\G,G')$$
Moreover, $f^1$ is a morphism of pointed sets (meaning it preserves distinguished elements), and when $G$ and $G'$ are abelian then it is a group homomorphism. 

\medskip

\subsection{Exact sequence in cohomology}\label{exactcoho} We now fix an arbitrary (not necessarily normal) $A$-definable subgroup $N$ of $G$. This yields the natural short exact sequence of $A$-definable pointed sets
\begin{equation}\label{seq}
1\to N \xrightarrow{\iota} G \xrightarrow{\pi} G/N\to 1.
\end{equation}
Note that, since $N$ is not necessarily normal in $G$, the left-coset space $G/N$ is not necessarily a group. It is however an $A$-definable (left) homogeneous space for $G$ (i.e., $G/N$ is an $A$-definable set equipped with an $A$-definable transitive left $G$-action -- in this case the natural $G$-action on the left). While the inclusion map $\iota$ is a group homomorphism, the projection map $\pi$ is generally just a morphism of $A$-definable homogenous $G$-spaces. Furthermore, $G/N$ has a natural structure of an $A$-definable pointed set, with distinguished element being the coset $N$, and so $\pi$ is also a homomorphism of $A$-definable pointed sets. When $N$ is normal in $G$, the above sequence is a normal short exact sequence of $A$-definable groups.

\medskip

Let $a\in H^0_{\d}(\G,G/N)$, here we think of $G/N$ as a pointed set and so the 0th cohomology (which the same as the $\G$-invariants of $G/N$) is also a pointed set as explained above. Then, $a$ is of the form $\pi(b)$ for some $b\in G$. For any $\sigma\in \G$ we have 
$$\pi(\sigma(b))=\sigma(\pi(b))=\sigma(a)=a=\pi(b)$$
and so $b^{-1}\cdot \sigma(b)\in N$. Thus the assignment $\sigma\mapsto b^{-1}\cdot \sigma(b)$ defines a map from $\G$ to $N$. Moreover, this map clearly belongs to $Z^1_{\d}(\G,N)$; and if $b'$ is another element of $G$ with $\pi(b')=a$, then $b'=bc$ for some $c\in N$ and hence
$$(bc)^{-1}\cdot \sigma(bc)=c^{-1}(b^{-1}\cdot \sigma(b))\sigma(c).$$
This shows that the assignment $\sigma\mapsto (b')^{-1}\cdot \sigma(b')$ is cohomologous to the previous one. Therefore, the cohomology class of this assignment is uniquely determined by $a$, and this class is denoted by $\delta(a)\in H^1_{\d}(\G,N)$. We have now a morphism
$$\delta : H^0_{\d}(\G,G/N)\to H^1_{\d}(\G,N)$$
of pointed sets (and of groups when $G$ is abelian) called the connecting homomorphism.

\bigskip

We are now in the position to state (and prove) how exactness transfers when passing to the sequence in cohomology.

\begin{theorem}\label{cohoseq}
Suppose $N$ is an $A$-definable subgroup of $G$ (not necessarily normal). Then, the sequence in cohomology induced from \eqref{seq}
$$1\to H^{0}_{\d}(\G,N)\xrightarrow{\iota^0}H^0_{\d}(\G.G)\xrightarrow{\pi^0} H^0_{\d}(\G,G/N)\xrightarrow{\delta}H^1_{\d}(\G,N)\xrightarrow{\iota^1} H^{1}_{\d}(\G,G)$$
is exact (as a sequence of pointed sets). Furthermore, if $N$ is normal in $G$, then the sequence remains exact when extended by
$$H^{1}_{\d}(\G,G)\xrightarrow{\pi^1}H^1_{\d}(\G,G/N).$$
\end{theorem}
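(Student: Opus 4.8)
The plan is to verify exactness of the cohomology sequence one node at a time, imitating the classical arguments from nonabelian Galois cohomology (as in \cite[Chapter I, \S5.4]{Serrebook}) but carried out concretely with definable cocycles and the fact (from Assumptions~\ref{assumptions}) that a definable set is $A$-definable precisely when it is $\G$-invariant setwise.

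\medskip

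First I would dispatch the $H^0$ part, which is routine: exactness at $H^0_{\d}(\G,N)$ just says $\iota^0$ is injective, which is clear since $\iota$ is injective; exactness at $H^0_{\d}(\G,G)$ says $\ker(\pi^0)=\operatorname{im}(\iota^0)$, which follows because for $b\in H^0_{\d}(\G,G)$, $\pi(b)$ being the distinguished coset $N$ means $b\in N$, and then $b$ is $\G$-invariant in $N$; exactness at $H^0_{\d}(\G,G/N)$ says $\ker(\delta)=\operatorname{im}(\pi^0)$, which is essentially the definition of $\delta$: for $a=\pi(b)$ with $b\in G$, $\delta(a)$ is trivial iff the cocycle $\sigma\mapsto b^{-1}\sigma(b)$ is a coboundary in $Z^1_{\d}(\G,N)$, i.e.\ iff there is $c\in N$ with $b^{-1}\sigma(b)=c^{-1}\sigma(c)$ for all $\sigma$, i.e.\ iff $bc^{-1}$ is $\G$-invariant, i.e.\ iff $a$ lifts to an element of $H^0_{\d}(\G,G)$.

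\medskip

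Next, exactness at $H^1_{\d}(\G,N)$, i.e.\ $\ker(\iota^1)=\operatorname{im}(\delta)$. For the inclusion $\operatorname{im}(\delta)\subseteq\ker(\iota^1)$: if $\Phi(\sigma)=b^{-1}\sigma(b)$ with $b\in G$, then viewed in $Z^1_{\d}(\G,G)$ this is exactly the coboundary of $b$, hence trivial in $H^1_{\d}(\G,G)$. Conversely, if $\Phi\in Z^1_{\d}(\G,N)$ becomes a coboundary in $G$, say $\Phi(\sigma)=b^{-1}\sigma(b)$ for some $b\in G$, then since $\Phi(\sigma)\in N$ we get $\pi(b)=\pi(\sigma(b))=\sigma(\pi(b))$ for all $\sigma$, so $a:=\pi(b)\in H^0_{\d}(\G,G/N)$ and by construction $\delta(a)=[\Phi]$. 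Finally, exactness at $H^1_{\d}(\G,G)$ (when $N$ is normal), i.e.\ $\ker(\pi^1)=\operatorname{im}(\iota^1)$. The inclusion $\operatorname{im}(\iota^1)\subseteq\ker(\pi^1)$ is immediate since $\pi\circ\iota$ is trivial. For the reverse, suppose $\Psi\in Z^1_{\d}(\G,G)$ has $\pi\circ\Psi$ a coboundary in $G/N$; here I expect the main obstacle, because one cannot simply ``divide'' by the coboundary element of the coset space without producing a cocycle landing in $N$. The standard move is: pick $\bar b\in G/N$ with $(\pi\circ\Psi)(\sigma)=\bar b^{-1}\cdot\sigma(\bar b)$, lift $\bar b$ to $b\in G$, and replace $\Psi$ by the cohomologous cocycle $\Psi'(\sigma)=b\,\Psi(\sigma)\,\sigma(b)^{-1}$ (using normality of $N$ so that conjugation is available and $\Psi'$ is still a cocycle valued in $G$); one checks $\pi\circ\Psi'$ is the trivial cocycle, hence $\Psi'(\sigma)\in N$ for all $\sigma$, so $\Psi'\in Z^1_{\d}(\G,N)$ and $[\Psi]=\iota^1[\Psi']$. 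The care needed here is in the bookkeeping with left cosets and in confirming that the modified cocycle is again definable — which it is, since it is cohomologous (via the element $b\in G$, equivalently via a definable datum) to the definable cocycle $\Psi$, and the set of definable cocycles is closed under being cohomologous as already noted in the excerpt.
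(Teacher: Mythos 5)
Your proposal is correct and follows essentially the same route as the paper's proof: the $H^0$ verifications, the characterization of $\ker\delta$ via $bc^{-1}$ being $\G$-invariant, the lifting argument for exactness at $H^1_{\d}(\G,N)$, and, for exactness at $H^1_{\d}(\G,G)$, the replacement of $\Psi$ by the cohomologous cocycle $\sigma\mapsto b\,\Psi(\sigma)\,\sigma(b)^{-1}$ landing in $N$ (which the paper phrases equivalently as taking the unique $\iota$-preimage of $a\cdot\Psi(\sigma)\cdot\sigma(a)^{-1}$). Your closing remark that definability of the modified cocycle is inherited from being cohomologous to the definable $\Psi$ matches the paper's observation as well.
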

\begin{proof}
It is easy to check that the sequence is exact at $H^{0}_{\d}(\G,N)$ and at $H^0_{\d}(\G,G)$. We now prove it is exact at $H^0_{\d}(\G,G/N)$. Let $a\in H^0_{\d}(\G,G/N)$ and $b\in G$ such that $\pi(b)=a$. Then, $\delta(a)=1$ if and only if there is $c\in N$ such that
$$b^{-1}\sigma(b)=c^{-1}\sigma(c), \quad \text{ for all }\sigma\in \G;$$
that is, $bc^{-1}\in H^0_{\d}(\G,G)$. Thus, $\delta(a)=1$ if and only there is $d\in H^0_{\d}(\G,G)$ such that $\pi^0(d)=a$. 

We now check the sequence in exact at $H^1_{\d}(\G,N)$. Let $\eta\in H^1_{\d}(\G,N)$ and $\Phi$ a cocycle in $Z^1_{\d}(\G,N)$ with cohomology class $\eta$. Then, $\iota^1(\eta)=1$ if and only there is $b\in G$ such that $\Phi(\sigma)=b^{-1}\sigma(b)$ for all $\sigma\in \G$. The latter implies 
$$\pi(b)^{-1}\sigma(\pi(b))=\pi(b^{-1}\sigma(b))=\pi(\Phi(\sigma))=1$$
and so $\pi(b)\in H^0_{\d}(\G,G/N)$. Clearly, $\delta(\pi(b))=\eta$, and exactness at $H^1_{\d}(\G,N)$ follows. 

Finally, assuming $N$ is normal in $G$, we check exactness at $H^1_{\d}(\G,G)$. Let $\Phi\in Z^1_{\d}(\G,N)$. Then, since $im(\iota)=ker(\pi)$, we get that $\pi\circ \iota\circ \Phi$ is the trivial cocycle from $\G$ to $G/N$. It follows that $im(\iota^1)\subseteq ker(\pi^1)$. Now, let $\Psi\in Z^1_{\d}(\G,G)$ be such that $\pi\circ \Psi$ is a coboundary from $\G$ to $G/N$. This means that there is $d\in G/N$ such that 
$$\pi(\Psi(\sigma))=d^{-1}\cdot \sigma(d), \quad \text{ for all } \sigma\in \G.$$
Let $a\in G$ such that $\pi(a)=d$. Then we can rearrange the above to
$$\pi(a\cdot  \Psi(\sigma)\cdot \sigma(a)^{-1})=1.$$
Since $\iota$ is injective and $im(\iota)=ker(\pi)$, there is a unique $c\in N$ such that $\iota(c)=a\cdot  \Psi(\sigma)\cdot \sigma(a)^{-1}$. If we set $\Lambda:\G\to N$ to be $\Lambda(\sigma)=c$ where $c$ is the unique $c$ just found, then one can readily check that $\Lambda$ is a definable cocycle from $\G$ to $N$. Moreover, $\iota\circ \Lambda$ is cohomologous to $\Psi$. This proves $ker(\pi^1)\subseteq im(\iota^1)$, as desired. 
\end{proof}

\smallskip

\subsection{Twisting definable cohomology}\label{twist} Recall that when $G$ is abelian, the cohomology sequence in Theorem~\ref{cohoseq} is an exact sequence of groups and group homomorphism, and so fibres are just translates of the kernels. However, in the general situation (when $G$ is not necessarily abelian), the exact sequence is just a sequence of pointed sets and thus carry less information than in the abelian case. Indeed, the kernel of a morphism of pointed sets does not generally yield any information about the other fibres. 

As in algebraic Galois cohomology \cite[\S1.3]{PR}, this can be overcome using a method introduced by Serre \cite[\S1.5]{SerreBour} called \emph{twisting}. We now discuss how a special case (which suffices for our purposes) of this method transfers to our general definable model-theoretic context. Let $\Phi\in Z^1_{\d}(\G,G)$ and define a new (twisted) action of $\G$ on $G$ by
$$\sigma * g=C_{\Phi(\sigma)}(\sigma(n))$$
where, for any $h\in G$, $C_h:G\to G$ denotes conjugation by $h$; namely, $C_{h}(g)=h\cdot g\cdot h^{-1}$ for all $g\in G$. We call this the induced action of $\G$ on $G$ twisted via conjugation by $\Phi$. When clarity is needed we denote this action by saying that $\G_\Phi$ acts on $G$. One readily checks that this is indeed a group action and in fact $\G_\Phi$ acts on $G$ by group automorphisms. Furthermore, since $\Phi:\G\to G$ is continuous, this action is continuous. 

With respect to this (non-natural) action of $\G_\Phi$ on $G$, we can define the notion of definable  (1-)cocycles, denoted $Z^1_{\d}(\G_\Phi,G)$, as those maps $\Psi:\G\to G$ such that
$$\Psi(\sigma_1\sigma_2)=\Psi(\sigma_1)\cdot \sigma_1 *(\Psi(\sigma_2)).$$
As before, such cocycle is said to be definable if there is tuple $a$ from $M$ and an $A$-definable function $h(x,y)$ such that 
$$\Psi(\sigma)=h(a,\sigma(a)), \quad \text{ for all } \sigma\in G.$$
Thus definable cocycles are still continuous (in this twisted context). Two cocycles $\Psi$ and $\Lambda$ in $Z^1_{\d}(\G_\Phi,G)$ are cohomologous if there is $b\in G$ such that
$$\Psi(\sigma)=b^{-1}\cdot \Lambda(\sigma)\cdot (\sigma * b), \quad \text{ for all }\sigma\in \G.$$
This again yields an equivalence relation. The set of equivalence classes is the 1st definable cohomology set $H^{1}_{\d}(\G_\Phi,G)$ twisted by $\Phi$. This has a natural structure of a pointed set. 

\begin{remark}
Note that when $G$ is abelian the twisted action of $\G_\Phi$ on $G$ is just the natural action of $\G$ on $G$ and so the cohomologies $H^{1}_{\d}(\G_\Phi,G)$ and $H^1_{\d}(\G,G)$ coincide.
\end{remark}

As with the 0th definable cohomology set $H^0_{\d}(\G,G)$, the 0th definable twisted cohomology $H^0_{\d}(\G_\Phi,G)$ is defined as the set of $\G_\Phi$-invariant points of $G$, which is a subgroup of $G$ (as $\G_\Phi$ acts on $G$ by group automorphisms). 

Now let $N$ be an $A$-definable normal subgroup of $G$. We can then restrict the action of $\G_\Phi$ on $G$ to $N$ (since $N$ is normal in $G$). This yields a continuous (twisted) action by group automorphisms of $\G_\Phi$ on $N$. In a similar way as above, we can construct the sets of twisted definable cocycles and cohomology $Z^1_{\d}(\G_\Phi,N)$ and $H^1_{\d}(\G_\Phi,N)$, respectively. Furthermore, letting $\pi:G\to G/N$ be the natural projection, we can twist the natural action of $\G$ on $G/N$ by the cocycle $\pi\circ \Phi\in Z^1_{\d}(\G,G/N)$. Then one can construct the twisted definable cocycles and cohomology sets $Z^1_{\d}(\G_{\pi\Phi},G/N)$ and $H^1_{\d}(\G_{\pi\Phi},G/N)$. For convenience, and it should not cause confusion, we denote $\G_{\pi\Phi}$ (acting on $G/N$) simply by $\G_\Phi$ and the set of definable cocycles and cohomology by $Z^1_{\d}(\G_{\Phi},G/N)$ and $H^1_{\d}(\G_{\Phi},G/N)$

\medskip

Consider the natural short exact sequence of $A$-definable groups (recall that we are assuming that $N$ is normal in $G$)
$$1\to N \xrightarrow{\iota} G \xrightarrow{\pi} G/N\to 1.$$
With the respect to the above twisted actions of $\G_\Phi$ on $N$, $G$ and $G/N$, one readily checks that for $\sigma\in \G$, $n\in N$, and $g\in G$ we have
$$\iota(\sigma*n)=\sigma*\iota(n) \quad \text{ and }\quad \pi(\sigma*g)=\sigma*\pi(g).$$
It follows from these equalities, exactly as we did in the previous sub-section, that we can induce morphisms of pointed sets in cohomology
$$\iota^i_\Phi:H^i_{\d}(\G_\Phi,N)\to H^i_{\d}(\G_\Phi,G) \quad \text{ and }\quad \pi^i_\Phi:H^i_{\d}(\G_\Phi,G)\to H^i_{\d}(\G_\Phi,G/N)$$
for $i=0,1$. Furthermore, we get a connecting morphism (or pointed sets)
$$\delta_\Phi:H^0(\G_\Phi,G/N)\to H^1_{\d}(\G_\Phi,N),$$
and obtain, by adapting the argument in the proof of Theorem~\ref{cohoseq}, the (twisted) exact sequence
\begin{align*}
1\to H^{0}_{\d}(\G_\Phi,N)\xrightarrow{\iota^0_\Phi}H^0_{\d}(\G_\Phi.G) & \xrightarrow{\pi^0_\Phi} H^0_{\d}(\G_\Phi,G/N)  \\ 
& \xrightarrow{\delta}H^1_{\d_\Phi}(\G_\Phi,N)\xrightarrow{\iota^1_\Phi} H^{1}_{\d}(\G_\Phi,G)\xrightarrow{\pi^1_\Phi}H^1_{\d}(\G_\Phi,G/N).
\end{align*}
of pointed sets.

\medskip

While one is not generally able to multiply cocycles in $Z^1_{\d}(\G,G)$, unless $G$ is abelian say, by going to the twisted context this is possible. We make this precise in the following result which also shows how the cohomologies $H^1_{\d}(\G,G)$ and $H^1_{\d}(\G_\Phi,G)$ are related (in fact in bijection).

\begin{proposition}\label{multiply}
Let $f_\Phi:Z^1_{\d}(\G_\Phi,G)\to Z^1_{\d}(\G,G)$ be the map defined by $f_\Phi(\Psi)=\Psi\cdot \Phi$; namely, for $\sigma\in G$, $f_\Phi(\Psi)(\sigma)=\Psi(\sigma)\cdot\Phi(\sigma)$. Then, $f_\Phi$ is a bijection. Furthermore, it induces a bijection in cohomology $F_\Phi:H^1_{\d}(\G_\Phi,G)\to H^1_{\d}(\G,G)$ which maps the distinguished element to the cohomology class of $\Phi$.
\end{proposition}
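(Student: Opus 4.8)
The plan is to verify directly that $f_\Phi$ is a well-defined bijection on the level of cocycles, and then check that it descends to the cohomology sets.

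\medskip

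First I would show $f_\Phi$ maps $Z^1_{\d}(\G_\Phi,G)$ into $Z^1_{\d}(\G,G)$. Given $\Psi \in Z^1_{\d}(\G_\Phi,G)$, set $\Theta = f_\Phi(\Psi)$, so $\Theta(\sigma) = \Psi(\sigma)\cdot\Phi(\sigma)$. Using the definitions $\sigma * g = C_{\Phi(\sigma)}(\sigma(g)) = \Phi(\sigma)\cdot\sigma(g)\cdot\Phi(\sigma)^{-1}$, the twisted cocycle identity for $\Psi$, and the (untwisted) cocycle identity for $\Phi$, one computes
\begin{align*}
\Theta(\sigma_1\sigma_2) &= \Psi(\sigma_1\sigma_2)\cdot\Phi(\sigma_1\sigma_2) \\
&= \Psi(\sigma_1)\cdot\bigl(\sigma_1 * \Psi(\sigma_2)\bigr)\cdot\Phi(\sigma_1)\cdot\sigma_1(\Phi(\sigma_2)) \\
&= \Psi(\sigma_1)\cdot\Phi(\sigma_1)\cdot\sigma_1(\Psi(\sigma_2))\cdot\Phi(\sigma_1)^{-1}\cdot\Phi(\sigma_1)\cdot\sigma_1(\Phi(\sigma_2)) \\
&= \Theta(\sigma_1)\cdot\sigma_1\bigl(\Psi(\sigma_2)\cdot\Phi(\sigma_2)\bigr) = \Theta(\sigma_1)\cdot\sigma_1(\Theta(\sigma_2)),
\end{align*}
so $\Theta$ is an (untwisted) cocycle. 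Definability is immediate: if $\Psi(\sigma) = h_1(a,\sigma(a))$ and $\Phi(\sigma) = h_2(b,\sigma(b))$ for $A$-definable $h_1, h_2$, then using the tuple $(a,b)$ and the $A$-definable function $(x_1,y_1,x_2,y_2) \mapsto h_1(x_1,y_1)\cdot h_2(x_2,y_2)$ we see $\Theta$ is definable. For bijectivity I would exhibit the inverse explicitly: the map $\Theta \mapsto \Theta\cdot\Phi^{-1}$ (pointwise, $\sigma \mapsto \Theta(\sigma)\cdot\Phi(\sigma)^{-1}$), and run the analogous computation — essentially the reverse of the one above — to check it sends $Z^1_{\d}(\G,G)$ back into $Z^1_{\d}(\G_\Phi,G)$ and is a two-sided inverse to $f_\Phi$. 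Note $\Phi^{-1}$ here denotes the pointwise inverse, which need not itself be a cocycle, but the product $\Theta\cdot\Phi^{-1}$ will be; this is the mild subtlety to state carefully.

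\medskip

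Next I would check $f_\Phi$ respects the cohomology equivalence relations in both directions. Suppose $\Psi$ and $\Lambda$ in $Z^1_{\d}(\G_\Phi,G)$ are (twist-)cohomologous via $b \in G$, i.e. $\Psi(\sigma) = b^{-1}\cdot\Lambda(\sigma)\cdot(\sigma * b)$. Multiplying on the right by $\Phi(\sigma)$ and using $(\sigma * b)\cdot\Phi(\sigma) = \Phi(\sigma)\cdot\sigma(b)$, one gets
$$f_\Phi(\Psi)(\sigma) = \Psi(\sigma)\Phi(\sigma) = b^{-1}\cdot\Lambda(\sigma)\cdot\Phi(\sigma)\cdot\sigma(b) = b^{-1}\cdot f_\Phi(\Lambda)(\sigma)\cdot\sigma(b),$$
so $f_\Phi(\Psi)$ and $f_\Phi(\Lambda)$ are cohomologous in the untwisted sense, via the same $b$. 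The identical computation read backwards shows the converse (using that $f_\Phi$ is a bijection on cocycles). Hence $f_\Phi$ induces a well-defined injection $F_\Phi$ on cohomology classes, which is surjective because $f_\Phi$ already is at the cocycle level. Finally, $f_\Phi$ sends the trivial (twisted) cocycle to $\Phi$ itself, so $F_\Phi$ sends the distinguished element of $H^1_{\d}(\G_\Phi,G)$ to the class of $\Phi$, as claimed.

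\medskip

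The whole argument is a sequence of group-theoretic manipulations, so there is no deep obstacle; the one place to be careful is the interplay between the twisted action $\sigma * (-)$ and ordinary multiplication — specifically the identity $(\sigma * b)\cdot\Phi(\sigma) = \Phi(\sigma)\cdot\sigma(b)$ and its variants — which is what makes the products telescope correctly. I would isolate that identity (and the companion $\Phi(\sigma)^{-1}\cdot(\sigma*g) = \sigma(g)\cdot\Phi(\sigma)^{-1}$ is just its rearrangement) once at the start so the subsequent computations read cleanly, and I would keep explicit track of the fact that pointwise inverses and pointwise products of cocycles need not be cocycles in general — it is only the specific combinations appearing here that are, precisely because one factor is twisted.
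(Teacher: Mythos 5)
Your proposal is correct and follows essentially the same route as the paper's proof: the same telescoping computation to verify the cocycle identity for $\Psi\cdot\Phi$, the same pointwise inverse $\Lambda\mapsto\Lambda\cdot\Phi^{-1}$, and the same right-multiplication-by-$\Phi(\sigma)$ argument to descend to cohomology. Your explicit isolation of the identity $(\sigma*b)\cdot\Phi(\sigma)=\Phi(\sigma)\cdot\sigma(b)$ and the remark on definability are slightly more detailed than the paper, but the argument is the same.
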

\begin{proof}
We first prove that indeed $f_\Phi(\Psi)\in Z^{1}_{\d}(\G,G)$ for every $\Psi\in Z^1_{\d}(\G_\Phi,G)$. 

Indeed, it is clearly definable (since $\Phi$ and $\Psi$ are definable) and for $\sigma_1,\sigma_2\in \G$ we have
\begin{align*}
f_\Phi(\Psi)(\sigma_1\sigma_2) &= \Psi(\sigma_1\sigma_2)\; \Phi(\sigma_1\sigma_2)\\
&= \Psi(\sigma_1)\sigma_1*(\Psi(\sigma_2))\; \Phi(\sigma_1)\sigma_1(\Phi(\sigma_2)) \\
&= \Psi(\sigma_1) \Phi(\sigma_1)\sigma_1(\Psi(\sigma_2))\Phi(\sigma_1)^{-1}\Phi(\sigma_1)\sigma_1(\Phi(\sigma_2)) \\
&= f_\Phi(\Psi)(\sigma_1)\cdot \sigma_1(f_\Phi(\Psi)(\sigma_2)).
\end{align*}
One easily checks that $f_\Phi$ has inverse $f^{-1}_\Phi:Z^1_{\d}(\G,G)\to Z^{1}_{\d}(\G_\Phi,G)$ given by 
$$f_\Phi^{-1}(\Lambda)(\sigma)=\Lambda(\sigma)\cdot (\Phi(\sigma))^{-1}.$$
Now, to pass to cohomology, let $\Psi,\Lambda\in Z^1_{\d}(\G_\Phi,G)$ be cohomologous witnessed by $b\in G$; namely,
$$\Lambda(\sigma)=b^{-1}\cdot\Psi(\sigma) \cdot \sigma* b, \quad \text{ for all }\sigma \in \G.$$
Multiplying on the right by $\Phi(\sigma)$ we get hat 
$$f_\Phi(\Lambda)(\sigma)=b^{-1}\cdot f_\Phi(\Psi)(\sigma)\cdot \sigma(b).$$
Thus, $f_\Phi(\Lambda)$ and $f_\Phi(\Psi)$ are cohomologous as elements in $Z^1_{\d}(\G,G)$, and so $f_\Phi$ induces a well defined map $F_\Phi:H^1_{\d}(\G_\Phi,G)\to H^{1}_{\d}(\G,G)$. Let $1$ be the trivial cocycle in $Z^1_{\d}(\G_\Phi,G)$, then, by definition, $F_\Phi$ maps the distinguished element of $H^1_{\d}(\G_\Phi,G)$ to the cohomology class of
$$f_\Phi(1)=\Phi,$$
as desired. Finally, one can similarly check that the inverse function $f^{-1}_\Phi$ induces a well defined map in cohomology $$F_\Phi^{-1}:H^1_{\d}(\G,G)\to H^1_{\d}(\G_\Phi,G)$$ which an easy computation shows that it is in fact the inverse for $F_\Phi$.
\end{proof}
 
From the proof of the proposition, we see that the bijection 
$$F_\Phi:H^1_{\d}(\G_\Phi,G)\to H^1_{\d}(\G,G)$$ 
restricts to a bijection between the kernel of $\pi^1_\Phi$ and the fibre $(\pi^1)^{-1}(\pi^1(\mu))$ where $\mu$ is the cohomology class of $\Phi$. Thus, we have proved the following.

\begin{corollary}\label{fibres}
With the above notation and $\mu$ denoting the cohomology class of $\Phi$, there is a natural bijection between $ker(\pi^1_\Phi)$ and the fibre
$$\P(\mu):=(\pi^1)^{-1}(\pi^1(\mu)).$$
\end{corollary}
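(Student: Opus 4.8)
The plan is to exploit Proposition~\ref{multiply}, whose proof already does essentially all the work; what remains is to check that the bijection $F_\Phi$ constructed there carries $\ker(\pi^1_\Phi)$ onto the fibre $\P(\mu)$, and vice versa. So the first thing I would do is recall the setup: we have the twisted action $\G_\Phi$ on $G$, on $N$, and on $G/N$ (the last via $\pi\circ\Phi$), compatible with $\iota$ and $\pi$, and we have the commuting square relating $f_\Phi$ (on $G$) with the analogous map on $G/N$, which I will call $\bar f_\Phi : Z^1_{\d}(\G_\Phi, G/N)\to Z^1_{\d}(\G, G/N)$, namely $\bar f_\Phi(\bar\Psi)(\sigma)=\bar\Psi(\sigma)\cdot (\pi\Phi)(\sigma)$. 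The key compatibility is that $\pi\circ f_\Phi(\Psi)=\bar f_\Phi(\pi\circ\Psi)$ for all $\Psi\in Z^1_{\d}(\G_\Phi,G)$, which is immediate from the definitions since $\pi$ is a homomorphism and $\pi(\sigma * g) = \sigma * \pi(g)$.

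Next I would pass to cohomology. By Proposition~\ref{multiply} applied to $G$ and (its analogue for) $G/N$, both $\bar F_\Phi : H^1_{\d}(\G_\Phi,G/N)\to H^1_{\d}(\G,G/N)$ and $F_\Phi : H^1_{\d}(\G_\Phi,G)\to H^1_{\d}(\G,G)$ are bijections; $\bar F_\Phi$ sends the distinguished element to the class of $\pi\circ\Phi=\pi^1(\mu)$, and $F_\Phi$ sends the distinguished element to $\mu$. The compatibility of the previous paragraph descends to the statement that the square
\begin{equation*}
\begin{array}{ccc}
H^1_{\d}(\G_\Phi,G) & \xrightarrow{\ F_\Phi\ } & H^1_{\d}(\G,G)\\[4pt]
\downarrow{\scriptstyle \pi^1_\Phi} & & \downarrow{\scriptstyle \pi^1}\\[4pt]
H^1_{\d}(\G_\Phi,G/N) & \xrightarrow{\ \bar F_\Phi\ } & H^1_{\d}(\G,G/N)
\end{array}
\end{equation*}
commutes. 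Then, chasing this square: an element $x\in H^1_{\d}(\G_\Phi,G)$ lies in $\ker(\pi^1_\Phi)$ iff $\pi^1_\Phi(x)$ is the distinguished element, iff (applying the bijection $\bar F_\Phi$) $\bar F_\Phi(\pi^1_\Phi(x))$ equals $\bar F_\Phi$ of the distinguished element, i.e.\ equals $\pi^1(\mu)$, iff (by commutativity) $\pi^1(F_\Phi(x))=\pi^1(\mu)$, iff $F_\Phi(x)\in \P(\mu)$. Since $F_\Phi$ is a bijection, it restricts to a bijection $\ker(\pi^1_\Phi)\xrightarrow{\ \sim\ }\P(\mu)$, which is precisely the assertion of the corollary.

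The main obstacle, such as it is, is bookkeeping rather than mathematics: one has to be careful that the twisted action on $G/N$ used to form $\pi^1_\Phi$ is exactly the one twisted by $\pi\circ\Phi$ (the paper has already fixed this convention, writing $\G_\Phi$ for $\G_{\pi\Phi}$), and that $\bar F_\Phi$ is the bijection of Proposition~\ref{multiply} for the pointed-set/homogeneous-space $G/N$ rather than a group — but the proof of that proposition only uses the group structure of the coefficient object in the guise of the twisted cocycle condition and the right-translation equivalence, both of which make sense for $G/N$ as a $G$-space, so nothing new is needed. I would keep the write-up to a few lines, essentially the diagram chase above, citing Proposition~\ref{multiply} and the remark preceding the corollary for the identification of the restricted bijection.
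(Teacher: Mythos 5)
Your proposal is correct and follows essentially the same route as the paper: the paper's proof is precisely the observation that the bijection $F_\Phi$ of Proposition~\ref{multiply} restricts to a bijection between $\ker(\pi^1_\Phi)$ and $\P(\mu)$, which you have simply made explicit via the commuting square with $\bar F_\Phi$ and the ensuing diagram chase. Nothing is missing; your write-up just supplies details the paper leaves to the reader.
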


This demonstrates how, using the method of twisting, one can describe the fibres of $\pi^1$ in the original (non-twisted) cohomology sequence.

\begin{remark}\label{welldefined}
We note that if $\Psi$ is a cocycle in $Z^1_{\d}(\G,G)$ which is cohomologous to $\Phi$, witnessed by $b\in G$ (i.e., $\Phi(\sigma)=b^{-1}\Psi(\sigma)\sigma(b)$), then the inner automorphism of $G$ given by conjugation $C_b:G\to G$ commutes with the twisted actions of $G_\Phi$ and $G_\Psi$ on $G$. More precisely, 
$$C_b(\sigma*_\Phi g)=\sigma*_{\Psi}C_b(g), \quad \text{ for all }\sigma\in \G, g\in G.$$
It follows that there are natural isomorphisms of pointed sets
$$Z^1_{\d}(\G_\Phi,G)\to Z^1_{\d}(\G_\Psi,G) \quad \text{ and }\quad H^i_{\d}(\G_\Phi,G)\to H^i_{\d}(\G_\Psi,G)$$
for $i=0,1$. Thus, for any $\mu\in H^1_{\d}(\G,G)$, we may and will denote by $Z^1_{\d}(\G_\mu,G)$ and $H^i_{\d}(\G_\mu,G)$ the set of definable cocycles $Z^1_{\d}(\G_\Phi,G)$ and cohomology sets $H^i_{\d}(\G_\Phi,G)$ for $i=0,1$, respectively, with $\Phi$ any definable cocycle having cohomology class $\mu$. Similar observations apply to $Z^1_{\d}(\G_\mu,N)$ and $H^i_{\d}(\G_\mu,N)$, and to $Z^1_{\d}(\G_\mu,G/N)$ and $H^i_{\d}(\G_\mu,G/N)$, where $N$ is an $A$-definable normal subgroup of $G$.
\end{remark}


\section{Further remarks on fibres and a finiteness result}\label{finalsection}

We fix, throughout this section, an $A$-definable normal subgroup $N$ of $G$. Let $\mu\in H^{1}_{\d}(\G,G)$. In this section we deploy the results from the previous two sections to establish an isomorphism of pointed sets between the twisted cohomology $H^1_{\d}(\G_\mu,N)$ (see Remark~\ref{welldefined}) and the non-twisted $H^1_{\d}(\G,N_\mu)$ where $N_\mu$ is the natural $A$-form of $N$ corresponding to $\mu$ (see details below). We then conclude with a finiteness result in definable cohomology.

But first we note an easy consequence of Corollary~\ref{fibres}. Consider, once again, the short exact sequence of $A$-definable groups 
$$1\to N \xrightarrow{\iota} G \xrightarrow{\pi} G/N\to 1.$$
For $\mu\in H^{1}_{\d}(\G,G)$, we let
$$\P(\mu):=(\pi^1)^{-1}(\pi^1(\mu))$$
denote the fibre of $\pi^1:H^1_{\d}(\G,G)\to H^1_{\d}(\G,G/N)$ above $\pi^1(\mu)$. Then, $\P(\mu)$ has the structure of a pointed set by specifying $\mu$ as its distinguished element. In the next statement we use the notation discussed in Remark~\ref{welldefined}.

\begin{lemma}\label{surjection}
For each $\mu\in H^{1}_{\d}(\G,G)$, there is a natural surjective morphism of pointed sets from $H^{1}_{\d}(\G_\mu,N)$ to $\P(\mu)$.
\end{lemma}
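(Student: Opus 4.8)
The plan is to assemble the statement from the two main ingredients already in hand: Corollary~\ref{fibres}, which gives a natural bijection $\P(\mu)\cong \ker(\pi^1_\mu)$ inside $H^1_{\d}(\G_\mu,G)$, and the twisted exact sequence
$$H^1_{\d}(\G_\mu,N)\xrightarrow{\iota^1_\mu} H^1_{\d}(\G_\mu,G)\xrightarrow{\pi^1_\mu} H^1_{\d}(\G_\mu,G/N)$$
from Section~\ref{twist} (using Remark~\ref{welldefined} to pass freely between $\G_\Phi$ and $\G_\mu$). Exactness at $H^1_{\d}(\G_\mu,G)$ says precisely that $\ker(\pi^1_\mu)=\operatorname{im}(\iota^1_\mu)$. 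Composing, we get that the map $\iota^1_\mu$ followed by the bijection $F_\Phi$ of Proposition~\ref{multiply} (restricted to $\ker(\pi^1_\mu)$) is a surjection $H^1_{\d}(\G_\mu,N)\twoheadrightarrow \P(\mu)$. So the ``surjective morphism'' in the statement is just $F_\Phi\circ \iota^1_\mu$, and surjectivity is immediate from exactness plus Corollary~\ref{fibres}.

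First I would recall that, by Remark~\ref{welldefined}, the sets $H^1_{\d}(\G_\mu,N)$, $H^1_{\d}(\G_\mu,G)$ and $H^1_{\d}(\G_\mu,G/N)$ are well-defined (up to canonical isomorphism of pointed sets) independently of the choice of representing cocycle $\Phi$ of $\mu$; so fix any definable cocycle $\Phi$ with class $\mu$ and work with $\G_\Phi$. Next I would point to the twisted exact sequence displayed at the end of Section~\ref{twist} and extract exactness at the term $H^1_{\d}(\G_\Phi,G)$, i.e. $\operatorname{im}(\iota^1_\Phi)=\ker(\pi^1_\Phi)$. Then I would invoke Proposition~\ref{multiply}: the map $F_\Phi:H^1_{\d}(\G_\Phi,G)\to H^1_{\d}(\G,G)$ is a bijection of pointed sets sending the distinguished element to $\mu$, and — as already observed in the remark preceding Corollary~\ref{fibres} — it carries $\ker(\pi^1_\Phi)$ bijectively onto $\P(\mu)=(\pi^1)^{-1}(\pi^1(\mu))$. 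Finally I would define the desired map as the composite
$$H^1_{\d}(\G_\mu,N)\xrightarrow{\iota^1_\mu} \ker(\pi^1_\mu)\xrightarrow[\sim]{F_\Phi} \P(\mu),$$
noting it sends the distinguished element to $\mu$ (since $\iota^1_\mu$ is a morphism of pointed sets and $F_\Phi$ sends the trivial class to $\mu$), hence is a morphism of pointed sets, and is surjective because $\iota^1_\mu$ surjects onto $\ker(\pi^1_\mu)$ by exactness and $F_\Phi$ restricts to a bijection there.

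There is no serious obstacle: the only mild point to address is \emph{naturality}, i.e. that the composite does not depend on the choice of $\Phi$ in the class $\mu$ — but this follows from Remark~\ref{welldefined}, since the canonical isomorphisms $H^i_{\d}(\G_\Phi,-)\to H^i_{\d}(\G_\Psi,-)$ for cohomologous $\Phi,\Psi$ are compatible with $\iota^1$ and with the bijections $F_\Phi$, $F_\Psi$ onto $\P(\mu)$. The one place requiring a line of care is verifying that $F_\Phi$ indeed matches $\ker(\pi^1_\Phi)$ with $\P(\mu)$ rather than with some other fibre; this is exactly the content of Corollary~\ref{fibres}, and I would simply cite it. Everything else is bookkeeping with pointed sets, so I would keep the write-up to a short paragraph.
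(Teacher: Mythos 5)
Your proposal is correct and takes essentially the same route as the paper's own proof: both define the map as $F_\Phi\circ\iota^1_\Phi$, obtain surjectivity from the inclusion $\ker(\pi^1_\Phi)\subseteq\operatorname{im}(\iota^1_\Phi)$ in the twisted exact sequence together with the bijection $\ker(\pi^1_\Phi)\to\P(\mu)$ of Corollary~\ref{fibres}, and check the distinguished elements match. Your additional remarks on independence of the choice of $\Phi$ via Remark~\ref{welldefined} are consistent with (and slightly more explicit than) the paper's treatment.
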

\begin{proof}
With $\Phi\in Z^1_{\d}(\G,G)$ having cohomology class $\mu$, it follows from \S\ref{twist} that $ker(\pi^1_\Phi)$ is contained in the image of $H^1_{\d}(\G_\Phi,N)$ under $\iota^1_\Phi$. By Corollary~\ref{fibres}, this kernel is naturally in bijection with $\P(\mu)$. The result now follows by noting that $\iota^1_\Phi$ maps the distinguished element of $H^1_{\d}(\G_\Phi,N)$ to that of $H^1_{\d}(\G_\Phi,G)$ and, in turn, the map $F_\Phi$ from Proposition~\ref{multiply} maps the latter distinguished element to $\mu$. 
\end{proof}

For $\mu\in H^{1}_{\d}(\G,G)$, let $\Phi\in Z^1_{\d}(\G,G)$ be a cocycle with cohomology class $\mu$. Using the notation of \S\ref{formcoho}, consider the map
$\Lambda_\Phi:\G\to \Autd(N)$ given by 
$$\Lambda_\Phi(\sigma)(-)=C_{\Phi(\sigma)}(-)$$
where recall that $C_{h}:N\to N$ denotes conjugation by $h\in G$; namely, $C_{h}(n)=h\cdot n\cdot h^{-1}$ for all $n\in N$. Since $N$ is a normal in $G$, the image of $\Lambda_\Phi$ is  indeed in $\Autd(N)$. Furthermore, $\Lambda_\Phi$ is a definable cocycle from $\G$ to $\Autd(N)$. It is clearly definable and for $\sigma_1,\sigma_2\in \G$ we have
\begin{align*}
\Lambda_\Phi(\sigma_1\sigma_2)(-) & =C_{\Phi(\sigma_1\sigma_2)}(-) \\
& = \Phi(\sigma_1\sigma_2)\cdot (-)\cdot (\Phi(\sigma_1\sigma_2))^{-1} \\
& = \Phi(\sigma_1)\sigma_1(\Phi(\sigma_2))\cdot (-) \cdot (\sigma_1(\Phi(\sigma_2)))^{-1}(\Phi(\sigma_1))^{-1} \\
& = \Phi(\sigma_1) \cdot \sigma_1(C_{\Phi(\sigma)})(-) \cdot (\Phi(\sigma_1))^{-1} \\
& = C_{\Phi(\sigma_1)}(\sigma_1(C_{\Phi(\sigma_2)})(-)) \\
& = \Lambda_\Phi(\sigma_1) \cdot \sigma_1(\Lambda_\Phi(\sigma_2))(-).
\end{align*}
Furthermore, if $\Psi\in H^1_{\d}(\G,G)$ is cohomologous to $\Phi$, we have that $\Lambda_\Psi$ is cohomologous to $\Lambda_{\Phi}$ in $Z^1_{\d}(\G,\Autd(N))$. Indeed, if $b\in G$ witnesses the fact that $\Psi$ and $\Phi$ are cohomologous, then conjugation $C_{b}\in\Autd(N)$ witnesses that $\Lambda_{\Psi}$ is cohomologous to $\Lambda_{\Phi}$.
 
We thus have a well defined map from $H^1_{\d}(\G,G)$ to $H^{1}_{\d}(\G,\Autd(N))$ mapping $\mu$ to the cohomology class of $\Lambda_\Phi$. By Proposition~\ref{isoform}, this yields a map from $H^1_{\d}(\G,G)$ to $\mathfrak{F}_{\d}(G)$ (the collection of $A$-forms of $N$ up to $A$-definable isomorphism). For $\mu\in H^1_{\d}(\G,G)$ we denote the associated $A$-form of $N$ by $N_\mu$. We note that when $G$ is abelian $\Lambda_\Phi$ is the trivial cocycle and so $N_\mu$ is definably isomorphic to $N$ over $A$.

\medskip

We now state and prove the main result of this section.

\begin{theorem}\label{twoiso}
Let $\mu\in H^1_{\d}(\G,G)$. Set $N_\mu$ to be the $A$-form of $N$ associated to $\mu$, as discussed above. Then, there is natural isomorphism of pointed sets between the twisted cohomology $H^1_{\d}(\G_\mu,N)$ and non-twisted cohomology $H^1_{\d}(\G,N_\mu)$. Note that the latter has coefficients in the $A$-form $N_\mu$.
\end{theorem}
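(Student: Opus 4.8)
To prove Theorem~\ref{twoiso}, I would construct an explicit bijection at the level of cocycles and then check it descends to cohomology. Fix $\Phi\in Z^1_{\d}(\G,G)$ with cohomology class $\mu$, and let $\Lambda_\Phi:\G\to\Autd(N)$, $\Lambda_\Phi(\sigma)=C_{\Phi(\sigma)}$, be the associated cocycle, so that $N_\mu$ is the $A$-form of $N$ arising from $\Lambda_\Phi$ via the construction in Section~\ref{formcoho}. Recall from the proof of Proposition~\ref{isoform} that, writing $\Lambda_\Phi(\sigma)(-)=h(a,\sigma(a),-)$ for a suitable tuple $a$ and $A$-definable $h$, the form $N_\mu$ comes equipped with a definable (over $M$) isomorphism $f:N_\mu\to N$ satisfying $\Lambda_\Phi(\sigma)=f\circ\sigma(f^{-1})$ for all $\sigma\in\G$; equivalently $f^{-1}\circ\sigma(f)=\Lambda_\Phi(\sigma)^{-1}=C_{\Phi(\sigma)^{-1}}$, or $\sigma(f)=f\circ C_{\Phi(\sigma)^{-1}}$ as maps $N\to N_\mu$.

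\emph{The map on cocycles.} Given $\Theta\in Z^1_{\d}(\G_\mu,N)=Z^1_{\d}(\G_\Phi,N)$, i.e.\ a definable map $\Theta:\G\to N$ with $\Theta(\sigma_1\sigma_2)=\Theta(\sigma_1)\cdot\sigma_1*\Theta(\sigma_2)=\Theta(\sigma_1)\cdot C_{\Phi(\sigma_1)}(\sigma_1(\Theta(\sigma_2)))$, I set
$$\widetilde{\Theta}(\sigma):=f\bigl(\Theta(\sigma)\bigr)\in N_\mu.$$
First I would verify $\widetilde{\Theta}\in Z^1_{\d}(\G,N_\mu)$: definability is immediate since $f$ is definable and $\Theta$ is definable, and for the cocycle identity one computes, using $\sigma_1(f)=f\circ C_{\Phi(\sigma_1)^{-1}}$,
$$\sigma_1\bigl(\widetilde{\Theta}(\sigma_2)\bigr)=\sigma_1(f)\bigl(\sigma_1(\Theta(\sigma_2))\bigr)=f\Bigl(\Phi(\sigma_1)^{-1}\,\sigma_1(\Theta(\sigma_2))\,\Phi(\sigma_1)\Bigr),$$
so that
$$\widetilde{\Theta}(\sigma_1)\cdot\sigma_1\bigl(\widetilde{\Theta}(\sigma_2)\bigr)=f\bigl(\Theta(\sigma_1)\bigr)\cdot f\bigl(C_{\Phi(\sigma_1)^{-1}}(\sigma_1(\Theta(\sigma_2)))\bigr)=f\bigl(\Theta(\sigma_1)\cdot C_{\Phi(\sigma_1)}(\sigma_1(\Theta(\sigma_2)))\bigr)=f\bigl(\Theta(\sigma_1\sigma_2)\bigr),$$
where the middle equality uses that $f$ is a group homomorphism $N\to N_\mu$ — wait, $f$ goes $N_\mu\to N$, so here I apply the homomorphism property of $f^{-1}$; the point is that $f(C_{\Phi(\sigma_1)^{-1}}(x))$ and $f(\Theta(\sigma_1))$ multiply correctly, so the identity $\widetilde\Theta(\sigma_1\sigma_2)=\widetilde\Theta(\sigma_1)\cdot\sigma_1(\widetilde\Theta(\sigma_2))$ holds. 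The inverse map sends $\Xi\in Z^1_{\d}(\G,N_\mu)$ to $\sigma\mapsto f^{-1}(\Xi(\sigma))$, and the same computation run backwards shows it lands in $Z^1_{\d}(\G_\Phi,N)$; the two assignments are visibly mutually inverse.

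\emph{Descent to cohomology and well-definedness.} Next I would check the bijection respects the cohomologous relation: if $\Theta'(\sigma)=b^{-1}\,\Theta(\sigma)\,(\sigma*b)$ with $b\in N$, then applying $f$ and using $f(\sigma*b)=f(C_{\Phi(\sigma)}(\sigma(b)))=\sigma(f)(\sigma(b))\cdot(\text{correction})$ — more cleanly, $f(\sigma*b)=f(\Phi(\sigma)\sigma(b)\Phi(\sigma)^{-1})$ while $\sigma(f(b))=f(\Phi(\sigma)^{-1}\sigma(b)\Phi(\sigma))$; conjugating appropriately shows $\widetilde{\Theta}'(\sigma)=f(b)^{-1}\,\widetilde{\Theta}(\sigma)\,\sigma(f(b))$, so $\widetilde\Theta'$ and $\widetilde\Theta$ are cohomologous in $Z^1_{\d}(\G,N_\mu)$ via $f(b)\in N_\mu$. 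Hence we obtain a bijection $H^1_{\d}(\G_\mu,N)\to H^1_{\d}(\G,N_\mu)$, and since $\Theta$ trivial gives $\widetilde\Theta$ trivial, it is a morphism of pointed sets, hence an isomorphism of pointed sets. Finally, naturality/independence of the choice of $\Phi$ and of $f$: a different $\Phi$ in the class $\mu$ is handled by Remark~\ref{welldefined} (which identifies the twisted cohomologies), and a different $f$ changes $\widetilde{\Theta}$ by composition with an $A$-definable automorphism of $N_\mu$ (by the injectivity argument in Section~\ref{formcoho}), so the induced map on cohomology is unchanged.

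\emph{Main obstacle.} The routine part is large but genuinely routine; the one place demanding care is keeping the conjugations straight — $f$ intertwines the $\sigma$-action on $N_\mu$ with the $\Phi$-\emph{conjugated} $\sigma$-action on $N$, and getting the direction of conjugation ($C_{\Phi(\sigma)}$ versus $C_{\Phi(\sigma)^{-1}}$) right in each of the three checks (cocycle identity, cohomologous relation, triviality) is where sign/inverse errors would creep in. I would pin down once and for all the identity $\sigma(f)=f\circ C_{\Phi(\sigma)^{-1}}$ from the construction of $N_\mu$ in Proposition~\ref{isoform} and then use only that.
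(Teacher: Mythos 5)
Your proposal follows essentially the same route as the paper's proof: transport cocycles through the definable isomorphism between $N_\mu$ and $N$ coming from the form construction of Section~\ref{formcoho}, after establishing that this isomorphism intertwines the natural $\G$-action on $N_\mu$ with the $\Phi$-twisted action on $N$ (the paper states this as $f(\sigma(m))=\sigma*f(m)$ and calls the resulting bijection $t_f$, running in the direction opposite to yours, which is immaterial). The outline, the descent to cohomology, the preservation of base points, and the appeal to Remark~\ref{welldefined} for independence of the choice of $\Phi$ all match.

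One concrete correction to the identity you propose to ``pin down once and for all'': from $\Lambda_\Phi(\sigma)=f\circ\sigma(f^{-1})$ one gets $\Lambda_\Phi(\sigma)^{-1}=\sigma(f)\circ f^{-1}$ (not $f^{-1}\circ\sigma(f)$), and hence, for $f$ oriented as a map $N\to N_\mu$, the correct intertwining identity is $\sigma(f)=f\circ C_{\Phi(\sigma)}$, not $f\circ C_{\Phi(\sigma)^{-1}}$. As written, your stated identity would produce $C_{\Phi(\sigma_1)^{-1}}$ in the cocycle verification, whereas the twisted cocycle condition requires $C_{\Phi(\sigma_1)}$; your displayed computation silently uses the correct sign in its last step, so the argument is internally inconsistent as stated but goes through verbatim once the identity is fixed. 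This is exactly the hazard you flagged yourself, and it is the only defect; the rest is sound.
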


Let $f: N_\mu\to N$ be the definable isomorphism (over $M$) constructed in \S\ref{formcoho} (right before Proposition~\ref{isoform}). We briefly spell out the construction (specialized to the current setting). Let $\Phi\in Z^1_{\d}(\G,G)$ be a definable cocycle with cohomology class $\mu$. Let $a$ be a tuple from $M$ and $h(x,y)$ an $A$-definable function witnessing that $\Phi$ is definable; that is, $\Phi(\sigma)=h(a,\sigma(a))$ for all $\sigma\in \G$. Recall that above we defined $\Lambda_\Phi:\G\to \Autd(N)$ by $\Lambda_\Phi(\sigma)(-)=C_{\Phi(\sigma)}(-)$ and showed it is a definable cocycle from $\G$ to $\Autd(N)$. It follows that
$$\Lambda_\Phi(\sigma)(-)=h(a,\sigma(a))\cdot (-)\cdot  h(a,\sigma(a))^{-1}.$$
In other words, if we set $\tilde h$ to be the $A$-definable (partial) function 
$$\tilde h(x,y,z)=h(x,y)\cdot z\cdot h(x,y)^{-1}$$ 
we get
$$\Lambda_{\Phi}(\sigma)(-)=\tilde h(a,\sigma(a),-), \quad \text{ for all }\sigma\in \G,$$
that is, $a$ and $\tilde h(x,y,z)$ witness that the cocycle $\Lambda_\Phi$ is definable. Now let $Y$ be the realisations of $tp(a/A)$ in $M$, then 
$$N_\mu=(Y\times N)/R$$
where $R$ is the equivalence relation 
$$(b,d)\,R\, (c,e)\quad  \iff \quad \tilde h(b,c,e)=d \quad \iff \quad h(b,c)\cdot e \cdot h(b,c)^{-1}=d.$$
The group structure on $N_\mu$ is the one induced by
$$(b,d) * (c,e)=(b,d\cdot \tilde h(b,c,e))=(b,d\cdot h(b,c)\cdot e \cdot h(b,c)^{-1})$$ 
for $(b,c), (c,e)\in Y\times N$. Finally, the function $f:N_{\mu}\to N$ is given by 
$$f(m)=\tilde h(a,\tau(a),n)=\Lambda_\Phi(\tau)(n)=C_{\Phi(\tau)}(n)$$
where $m$ is the $R$-equivalence class of $(\tau(a),n)\in Y\times N$.

\medskip

The upshot is that now Theorem \ref{twoiso} is a consequence of the following lemma.

\begin{lemma}
Let $f:N_\mu\to N$ be as above. Set 
$$t_f:Z^1_{\d}(\G,N_\mu)\to Z^{1}_{\d}(\G_\mu,N)$$ 
to be the map defined by $t_f(\Psi)=f\circ \Psi$. Then, $t_f$ is a bijection. Furthermore, it induces an isomorphism of pointed sets $T_f:H^1_{\d}(\G,N_\mu)\to H^{1}_{\d}(\G_\mu,N)$
\end{lemma}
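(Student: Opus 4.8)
The plan is to verify directly that $t_f$ is a well-defined bijection on the level of cocycles, and then push it to cohomology. The key observation driving everything is the compatibility of $f$ with the two group actions: since $f:N_\mu \to N$ is the isomorphism whose associated cocycle $\Phi_{N_\mu, f^{-1}}$ is (cohomologous to) $\Lambda_\Phi$, we have $f^{-1}\circ \sigma(f) = \Lambda_\Phi(\sigma) = C_{\Phi(\sigma)}$ as elements of $\Autd(N)$, equivalently $\sigma(f) = f \circ C_{\Phi(\sigma)}^{-1}$... more usefully, for $n \in N$ one gets $f(\sigma(f^{-1}(n))) = C_{\Phi(\sigma)}(n)$. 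First I would record the precise intertwining identity: for the natural action of $\G$ on $N_\mu$ (it is an $A$-definable group) and the twisted action $\sigma * n = C_{\Phi(\sigma)}(\sigma(n))$ of $\G_\mu$ on $N$, one has
$$f(\sigma(m)) = \sigma * f(m), \quad \text{for all } \sigma\in\G,\ m\in N_\mu.$$
This is the differential/definable analogue of the standard fact in Serre's twisting that a trivialising isomorphism intertwines the standard and twisted actions; it follows by unwinding the explicit description of $f$ given just above the lemma, using $\sigma(f) = C_{\Phi(\sigma)} \circ f \circ (\text{correction})$ together with the cocycle identity for $\Phi$.

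Granting that identity, the rest is bookkeeping. I would check that $t_f(\Psi) = f\circ\Psi$ really lands in $Z^1_{\d}(\G_\mu, N)$: it is definable because $f$ is definable over $M$ and $\Psi$ is a definable cocycle (compose the defining data and absorb the extra parameters of $f$ into the tuple), and the twisted cocycle identity
$$(f\circ\Psi)(\sigma_1\sigma_2) = f(\Psi(\sigma_1)\cdot\sigma_1(\Psi(\sigma_2))) = f(\Psi(\sigma_1))\cdot f(\sigma_1(\Psi(\sigma_2))) = (f\circ\Psi)(\sigma_1)\cdot \sigma_1 * (f\circ\Psi)(\sigma_2)$$
uses that $f$ is a group homomorphism for the first split and the intertwining identity for the second. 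The inverse map is $\Lambda \mapsto f^{-1}\circ\Lambda$, and the analogous computation (now using $f^{-1}(\sigma * n) = \sigma(f^{-1}(n))$, which is just the intertwining identity read backwards) shows it lands in $Z^1_{\d}(\G, N_\mu)$; the two composites are visibly the identity, so $t_f$ is a bijection.

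Finally, to descend to cohomology: if $\Psi, \Psi' \in Z^1_{\d}(\G, N_\mu)$ are cohomologous, witnessed by $b \in N_\mu$ with $\Psi'(\sigma) = b^{-1}\cdot\Psi(\sigma)\cdot\sigma(b)$, then applying $f$ and using that $f$ is a homomorphism and the intertwining identity gives
$$(f\circ\Psi')(\sigma) = f(b)^{-1}\cdot (f\circ\Psi)(\sigma)\cdot f(\sigma(b)) = f(b)^{-1}\cdot(f\circ\Psi)(\sigma)\cdot (\sigma * f(b)),$$
so $t_f(\Psi')$ and $t_f(\Psi)$ are cohomologous in $Z^1_{\d}(\G_\mu, N)$; the same argument for $f^{-1}$ gives the converse, so $T_f: H^1_{\d}(\G, N_\mu) \to H^1_{\d}(\G_\mu, N)$ is a well-defined bijection. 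It is a morphism of pointed sets because $t_f$ sends the trivial cocycle to the trivial cocycle ($f$ maps the identity of $N_\mu$ to the identity of $N$). I expect the main obstacle to be pinning down the intertwining identity $f(\sigma(m)) = \sigma * f(m)$ cleanly — one has to be careful about whether $f$ corresponds to $\Lambda_\Phi$ on the nose or only up to the coboundary built into the construction in \S\ref{formcoho}, and to chase the parameter tuple $a$ and the function $\tilde h$ through correctly — but once that identity is in hand, definability preservation and the cocycle/coboundary verifications are all routine one-line computations of the kind already done in Proposition~\ref{multiply}.
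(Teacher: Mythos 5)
Your proposal is correct and follows essentially the same route as the paper: both reduce everything to the intertwining identity $f(\sigma(m))=\sigma * f(m)$ and then carry out the routine verifications that $t_f$ preserves definable cocycles, has inverse $\Lambda\mapsto f^{-1}\circ\Lambda$, respects the cohomologous relation, and sends trivial cocycle to trivial cocycle. The one step you defer --- pinning down that identity on the nose rather than up to a coboundary --- is exactly what the paper checks by direct computation using the explicit description $f(m)=C_{\Phi(\tau)}(n)$ for $m$ the $R$-class of $(\tau(a),n)$, and your worry resolves favourably: the construction in \S\ref{formcoho} yields $\Lambda_\Phi(\sigma)=f\circ\sigma(f)^{-1}$ exactly, so no correction term is needed.
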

\begin{proof}
We first show that $f$ commutes with the actions of $\G$ on $N_\mu$ and $\G_\mu$ on $N$; that is, we prove that for $\sigma\in \G$ and $m\in N_\mu$ we have
\begin{equation}\label{equalaction}
f(\sigma(m))=\sigma* f(m).
\end{equation}
Indeed, let $m$ be the $R$-equivalence class of $(\tau(a), n)\in Y\times N$, by construction of $f$ we have (see discussion above)
$$f(m)=C_{\Phi(\tau)}(n)\quad \text{ and }\quad f(\sigma(m))=C_{\Phi(\sigma\tau)}(\sigma(n)).$$
The latter follows as $\sigma(m)$ is the $R$-equivalence class of $(\sigma\tau(a),\sigma(n))$. Using this, we get
\begin{align*}
\sigma * (f(m))&= \sigma* (C_{\Phi(\tau)}(n)) \\
&= C_{\Phi(\sigma)}\left(\sigma\left(C_{\Phi(\tau)}(n)\right)\right) \\
&= C_{\Phi(\sigma)}C_{\sigma(\Phi(\tau))}\sigma(n) \\
&= C_{\Phi(\sigma\tau)}\sigma(n) \\
&= f(\sigma(m)) 
\end{align*}
as desired. 

Let $\Psi\in Z^1_{\d}(\G,N_\mu)$. It follows from \eqref{equalaction}, as we have done in previous sections, that $f\circ\Psi$ is a cocycle from $\G_\mu$ to $N$.  As $f$ is a definable function, $t_f(\Psi)=f\circ\Psi$ is in fact in $Z^1_{\d}(\G_\mu,N)$. One can analogously consider $f^{-1}:N\to N_\mu$ and easily argue that the map
$$t_{f^{-1}}:Z^1_{\d}(\G_\mu,N)\to Z^{1}_{\d}(\G,N_\mu)$$
given by $t_{f^{-1}}(\Lambda)=f^{-1}\circ \Lambda$ is the inverse of $t_f$. Note here that one can use \eqref{equalaction} to argue that $f^{-1}$ commutes with the relevant actions.

It also follows from \eqref{equalaction}, as we have explicitly done in previous sections, that $t_f$ induces a (well defined) map $T_f:H^1_{\d}(\G,N_\mu)\to H^{1}_{\d}(\G_\mu,N)$. Clearly, since $f:N_\mu\to N$ is a group homomorphism, the map $t_f$ maps the trivial cocycle of $Z^{1}_{\d}(\G,N_\mu)$ to that of $Z^{1}_{\d}(\G_\mu,N)$. Thus, $T_{f}$ maps distinguished element to distinguished element. Furthermore, one readily checks that the induced map $T_{f^{-1}}:H^1_{\d}(\G_\mu,N)\to H^{1}_{\d}(\G,N_\mu)$ is the inverse of $T_f$, and hence $T_f$ is indeed an isomorphism of pointed sets.
\end{proof}

Here is the promised finiteness result.

\begin{corollary}\label{finitecohoapply}
If $H^1_{\d}(\G,G/N)$ is finite and $H^1_{\d}(\G, N_\mu)$ is also finite, for all $\mu\in H^1_{\d}(\G,G)$, then $H^1_{\d}(\G,G)$ is finite as well.
\end{corollary}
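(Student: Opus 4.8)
The plan is to combine the surjection of Lemma~\ref{surjection}, the isomorphism of Theorem~\ref{twoiso}, and the exactness at $H^1_{\d}(\G,G)$ from Theorem~\ref{cohoseq} to count the fibres of $\pi^1$ and the base $H^1_{\d}(\G,G/N)$ separately. Concretely, the first thing I would observe is that since $N$ is normal in $G$, we have the morphism of pointed sets
$$\pi^1:H^1_{\d}(\G,G)\to H^1_{\d}(\G,G/N),$$
and therefore $H^1_{\d}(\G,G)=\bigcup_{\nu\in \operatorname{im}(\pi^1)} (\pi^1)^{-1}(\nu)$ is a disjoint union of fibres indexed by a subset of $H^1_{\d}(\G,G/N)$. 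By hypothesis $H^1_{\d}(\G,G/N)$ is finite, so it suffices to show each nonempty fibre of $\pi^1$ is finite.

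Next I would handle a single nonempty fibre. Pick $\nu\in \operatorname{im}(\pi^1)$ and choose $\mu\in H^1_{\d}(\G,G)$ with $\pi^1(\mu)=\nu$; then the fibre $(\pi^1)^{-1}(\nu)$ is exactly $\P(\mu)=(\pi^1)^{-1}(\pi^1(\mu))$ in the notation of \S\ref{finalsection}. By Lemma~\ref{surjection} there is a surjective morphism of pointed sets
$$H^1_{\d}(\G_\mu,N)\twoheadrightarrow \P(\mu),$$
so $|\P(\mu)|\le |H^1_{\d}(\G_\mu,N)|$. By Theorem~\ref{twoiso}, $H^1_{\d}(\G_\mu,N)$ is in bijection with $H^1_{\d}(\G,N_\mu)$, which is finite by hypothesis. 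Hence every nonempty fibre of $\pi^1$ is finite, and since there are only finitely many of them (at most $|H^1_{\d}(\G,G/N)|$), the set $H^1_{\d}(\G,G)$ is finite, completing the proof.

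There is really no serious obstacle here: all the heavy lifting has been done in the preceding sections, and the corollary is a bookkeeping argument. The only point that requires a word of care is making sure that the indexing of fibres is by the \emph{image} of $\pi^1$ rather than all of $H^1_{\d}(\G,G/N)$ (empty fibres contribute nothing), and that "finite $\times$ finite is finite" is being applied at the level of pointed sets, where a surjection of pointed sets still controls cardinality. I would also remark in passing that, as noted in \S5 of the introduction, the identical argument with "finite" replaced by "of cardinality $\le\kappa$" throughout gives: if $|H^1_{\d}(\G,G/N)|\le\kappa$ and $|H^1_{\d}(\G,N_\mu)|\le\kappa$ for all $\mu$, then $|H^1_{\d}(\G,G)|\le\kappa$, since a $\kappa$-indexed union of sets each of size $\le\kappa$ has size $\le\kappa$ (for $\kappa$ infinite).
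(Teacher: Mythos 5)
Your proposal is correct and follows essentially the same route as the paper's proof: decompose $H^1_{\d}(\G,G)$ into the finitely many $\pi^1$-fibres, identify each fibre with $\P(\mu)$ for a chosen preimage $\mu$, and bound its size via the surjection of Lemma~\ref{surjection} combined with the bijection of Theorem~\ref{twoiso}. Your closing remark about replacing ``finite'' by ``of cardinality $\le\kappa$'' is likewise exactly what the paper records as Corollary 5.1.
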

\begin{proof}
Since $H^1_{\d}(\G, G/N)$ is finite, there are $\mu_1,\dots,\mu_r\in H^1_{\d}(\G, G)$ such that 
$$H^1_{\d}(\G, G/N)= \{\pi^1(\mu_1),\dots, \pi^1(\mu_r)\}.$$
Note that the $\pi^1$-fibre in $H^1_{\d}(\G, G)$ over $\pi^1(\mu_i)$ coincides with the (pointed) set $\P(\mu_i)$ as defined above. By Lemma \ref{surjection} and Theorem~\ref{twoiso}, this set is a homomorphic image of the cohomology $H^1_{\d}(\G, N_{\mu_i})$ where $N_{\mu_i}$ is the $A$-form of $N$ associated to $\mu_i$. By our assumption, the latter cohomology set is finite, and so the $\pi^1$-fibre in $H^1_{\d}(\G, G)$ over $\pi^1(\mu_i)$ is finite for all $i=1,\dots,r$. It follows that $H^1_{\d}(\G, G)$ is finite.
\end{proof}

\begin{remark}\label{finalremark} \
\begin{enumerate}
\item [(i)] In the case when $G$ is abelian the conclusion of the above corollary holds under the weaker assumption that $H^1_{\d}(\G,G/N)$ and $H^1_{\d}(\G,N)$ are finite. This follows immediately from Theorem~\ref{cohoseq} as in this case the cohomology sequence is an exact sequence of groups (and group homomorphisms).
\item [(ii)] In the context of classical (algebraic) Galois cohomology (see Example~\ref{examples}(i)), the corollary is well known and appears in \S6.4 of \cite{PR} and also in \S4 of Chapter III of \cite{Serrebook}. Our arguments are based on (but are somewhat simpler and more explicit than) those appearing in \cite{PR} (cf. \S1.3, \S2.2, and \S6.4 of this reference).
\item [(iii)] In the context of Kolchin's (differential) constrained cohomology (see Example~\ref{examples}(ii)), the above corollary (which is not treated in Kolchin's book~\cite{Kolchinbook2} or elsewhere, to the author's knowledge) is used in \cite[Lemma 2.6]{LS-Pillay}. There, a sketch of the argument is pointed out (for the differential case) leaving details to the reader.
\end{enumerate}
\end{remark}

\section{Differential Galois cohomology}\label{DGC}
In this section we complete the results on differential constrained cohomology of differential algebraic groups over bounded, differentially large, fields from \cite{LS-Pillay}. We also clarify a proof from \cite{Kamensky-Pillay} about the algebraic Galois cohomology of algebraic groups over bounded fields.  

For model-theoretic notions, differential algebraic notions and their inter-relations, see  the introductions to \cite{LS-Pillay} and \cite{Pillay-PV}, and the various references there (such as Poizat's \cite{Poizat-groupes-stables}). But we recall some key definitions:  Throughout fields are assumed to be of characteristic zero. A field $K$ is said to be {\em bounded} or equivalently has Serre's property (F), if $K$ has only finitely many extensions of degree $n$, for each $n\in \mathbb N$.  A field $K$ is said to be {\em large} if whenever $V$ is a $K$-irreducible variety with a smooth $K$-point, then $V$ has a Zariski-dense set of $K$-points. A differential field $(K,\partial)$ is said to be {\em differentially large} if $K$ is large as a field, and for any differential field extension $(L,\partial)$, if $K$ is existentially closed in $L$ (as fields), then $(K,\partial)$ is existentially closed in $(L,\partial)$. 
We will also be using notation from Example 1.3. 

\medskip

We will make use of a slight generalization or  version of Corollary~\ref{finitecohoapply}.   So here we are in the same general context as the previous section: $M$ is a structure, $A$ a subset of the universe of $M$ as in Assumption 1.1, $G$ is a group definable in $M$ over $A$, $N$ a normal subgroup of $G$ definable in $M$ over $A$, $\pi$ denotes the surjective homomorphism $G\to G/N$, and $\G = Aut(M/A)$. 

\begin{corollary} Let $\kappa$ be any infinite cardinal. Suppose that $|H^{1}_{\d}(\G, G/N)| \leq \kappa$ and $|H^1_{\d}(\G,N_{\mu})| \leq \kappa$ for all $\mu\in H^{1}_{\d}(\G, G)$.  Then $|H^{1}_{\d}(\G, G)| \leq \kappa$.
\end{corollary}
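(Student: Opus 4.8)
The plan is to mimic the proof of Corollary~\ref{finitecohoapply} verbatim, replacing ``finite'' with ``of cardinality $\leq\kappa$'' throughout, and using that $\kappa$ is infinite precisely where one needs a union of $\leq\kappa$-many sets each of size $\leq\kappa$ to again have size $\leq\kappa$. Concretely: since $|H^1_{\d}(\G,G/N)|\leq\kappa$, choose a set of representatives $\{\mu_i : i\in I\}\subseteq H^1_{\d}(\G,G)$ with $|I|\leq\kappa$ such that $\pi^1$ restricted to $\{\mu_i\}$ surjects onto $H^1_{\d}(\G,G/N)$ — one such $\mu_i$ for each class in the image of $\pi^1$, which is a subset of $H^1_{\d}(\G,G/N)$, hence of size $\leq\kappa$.

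The key step is the partition $H^1_{\d}(\G,G)=\bigcup_{i\in I}\P(\mu_i)$, where $\P(\mu_i)=(\pi^1)^{-1}(\pi^1(\mu_i))$ is the fibre over $\pi^1(\mu_i)$: every element of $H^1_{\d}(\G,G)$ lies in the $\pi^1$-fibre over \emph{some} element of $H^1_{\d}(\G,G/N)$, and that element is one of the $\pi^1(\mu_i)$. Then I would bound $|\P(\mu_i)|$: by Lemma~\ref{surjection} there is a surjection $H^1_{\d}(\G_{\mu_i},N)\twoheadrightarrow\P(\mu_i)$, and by Theorem~\ref{twoiso} the source is in bijection with $H^1_{\d}(\G,N_{\mu_i})$, which has cardinality $\leq\kappa$ by hypothesis. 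Hence $|\P(\mu_i)|\leq\kappa$ for each $i$.

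Finally, $|H^1_{\d}(\G,G)|\leq\sum_{i\in I}|\P(\mu_i)|\leq |I|\cdot\kappa\leq\kappa\cdot\kappa=\kappa$, using that $\kappa$ is infinite. This completes the argument. There is essentially no obstacle here: the only thing to be careful about is that $\kappa$ must be assumed infinite for the cardinal arithmetic $\kappa\cdot\kappa=\kappa$ to hold (a finite union of finite sets being finite is the special case used in Corollary~\ref{finitecohoapply}), and that the index set $I$ is genuinely of size $\leq\kappa$ — which holds because the image of $\pi^1$ injects into $H^1_{\d}(\G,G/N)$. No new ideas beyond the previous section are needed; the statement is a purely cardinal-arithmetic strengthening of Corollary~\ref{finitecohoapply}.
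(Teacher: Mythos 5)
Your argument is correct and is essentially identical to the paper's own proof, which likewise chooses representatives $\mu_i$ indexed by a cardinal $\lambda\leq\kappa$, bounds each fibre $\P(\mu_i)$ by $|H^1_{\d}(\G,N_{\mu_i})|\leq\kappa$ via Lemma~\ref{surjection} and Theorem~\ref{twoiso}, and concludes by the same cardinal arithmetic. Your remark that the representatives need only cover the \emph{image} of $\pi^1$ inside $H^1_{\d}(\G,G/N)$ is a slightly more careful phrasing than the paper's, but the substance is the same.
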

\begin{proof} This has an identical proof as Corollary~\ref{finitecohoapply}: choose $\mu_{i}$ for $i<\lambda\leq \kappa$ such that  $H^1_{\d}(\G, G/N) = \{\pi^1(\mu_{i}): i< \lambda\}$.  For each $i$, the $\pi^1$-fibre over $\pi^1(\mu_{i})$ is the image of $H^1_{\d}(\G, N_{\mu_{i}})$ under a suitable map, so has cardinality at most $\kappa$, hence $H^1_{\d}(\G, G)$ has cardinality at most $\kappa$. 
\end{proof}

The following was proved in \cite[Theorem 5.2]{Kamensky-Pillay}, but some of the subtleties around exact sequences of cohomology sets were overlooked. We take the opportunity
to give the proof again.

\begin{proposition} Suppose the field $K$ is bounded and $G$ is any algebraic group over $K$. Then $H^{1}(K,G)$ is countable.   

\end{proposition}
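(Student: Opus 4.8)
The proof is an induction on $\dim G$ (as an algebraic group over $K$), using the generalized Corollary above with $\kappa = \aleph_0$. The base case is $\dim G = 0$, i.e. $G$ is a finite algebraic group over $K$; here $H^1(K,G)$ classifies $K$-forms of $G$ (equivalently, finite Galois-twisted versions), and since $G$ has only finitely many elements over $K^{\mathrm{alg}}$, a cocycle is determined by its values on a finite-index open subgroup of the absolute Galois group, so $H^1(K,G)$ is controlled by the finitely many field extensions of $K$ of bounded degree --- this is exactly where boundedness of $K$ enters, and it gives \emph{finiteness} in the zero-dimensional case. For the inductive step, suppose $\dim G = d > 0$ and the result holds for all algebraic groups over $K$ (or over any bounded field, which $K$ remains) of smaller dimension.

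\textbf{The inductive step.} Let $G^0$ be the connected component of the identity; it is a normal $K$-subgroup of finite index, and $G/G^0$ is finite. If $G$ is not connected, apply the generalized Corollary with $N = G^0$: the quotient cohomology $H^1(K, G/G^0)$ is countable (indeed finite) by the base case, and for each $\mu$ the form $(G^0)_\mu$ is again a connected algebraic group over $K$ of dimension $d$, so one does not immediately reduce dimension --- instead one should handle the reduction to the connected case more carefully, or simply note that one may assume $G$ connected after a separate first reduction. So assume $G$ is connected. If $G$ is linear: pick a proper normal connected $K$-subgroup $N$ with $0 < \dim N < d$ when possible (e.g. the unipotent radical, or a Borel-related subgroup, or in the semisimple case pass to an isogeny); then $H^1(K, G/N)$ is countable by induction, and each $A$-form $N_\mu$ of $N$ is an algebraic group over $K$ of dimension $\dim N < d$, so $H^1(K, N_\mu)$ is countable by induction, and the generalized Corollary gives $|H^1(K,G)| \le \aleph_0$. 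For general (not necessarily linear) connected $G$, use Chevalley's theorem: there is an exact sequence $1 \to L \to G \to B \to 1$ with $L$ linear connected and $B$ an abelian variety, both over $K$; again each piece, and each twisted form thereof, has dimension $< d$ unless $G$ is itself already abelian or already linear, so one handles those two cases as the genuine base cases of the dimension induction (the truly irreducible cases being: $G$ a simple algebraic group, $G$ a simple abelian variety, or $G$ finite). The only subtlety is to organize the induction so that in every non-trivial application of the Corollary both $G/N$ \emph{and} all the forms $N_\mu$ have strictly smaller dimension than $G$; this is arranged by always choosing $N$ with $0 < \dim N < \dim G$, which is possible for any algebraic group that is neither finite nor simple (in the linear case) and neither an abelian variety nor simple.

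\textbf{The main obstacle.} The crux is the zero-dimensional case together with verifying that $A$-forms behave well: one must know that a $K$-form of a finite algebraic group over a bounded field $K$ yields only countably (finitely) many cohomology classes, and this is where Serre's property (F) is used essentially --- a cocycle $\G \to G(K^{\mathrm{alg}})$ factors through $\G/\G_{L}$ for $L/K$ finite of degree bounded in terms of $|G(K^{\mathrm{alg}})|$, and boundedness makes the set of such $L$ finite, hence the set of cocycles finite. A secondary subtlety, precisely the one flagged in the remark about \cite{Kamensky-Pillay}, is that the cohomology sets here are only pointed sets, so one genuinely needs Lemma~\ref{surjection} and Theorem~\ref{twoiso} to know that \emph{every} fibre of $\pi^1$ (not just the kernel) is a homomorphic image of $H^1(K, N_\mu)$ for the appropriate twist; the earlier literature's argument implicitly used only the kernel. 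Finally one must check the translation from the model-theoretic $H^1_{\d}(\G, G)$ to classical $H^1(K,G)$ --- but this is exactly Example~\ref{examples}(i), so no work is needed there. Modulo these points the proof is a clean dimension induction driven by the generalized Corollary.
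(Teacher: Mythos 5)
Your reduction scheme (pass to the connected component, then use the Chevalley decomposition $1\to L\to G\to B\to 1$ together with the generalized corollary) matches the paper's, but organizing it as an induction on dimension does not close the argument: the ``truly irreducible cases'' you defer to --- $G$ a simple linear algebraic group and $G$ a (simple) abelian variety --- are precisely where all the content of the proposition lives, and you give no argument for either. A dimension induction cannot produce these: no choice of proper normal $N$ with $0<\dim N<\dim G$ exists there, so they are base cases, and they are not handled by the boundedness argument you give for finite $G$ (which is the easy case, Serre's Proposition 8). For the linear part one needs Serre's theorem that $H^1(K,L)$ is \emph{finite} for every linear algebraic group $L$ over a bounded field (Chapter III, \S 4 of \cite{Serrebook}); this is a genuine theorem applying to all linear groups at once, and quoting it makes the dimension induction on the linear side unnecessary. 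For the abelian variety part, note that $H^1(K,B)$ is in general \emph{infinite} for an abelian variety over a bounded field (e.g.\ over $p$-adic fields), so no finiteness statement is available; the paper's argument for countability is the model-theoretic observation (from \cite{Kamensky-Pillay}) that $H^1(K,B)$ coincides with $H^1(K_0,B)$ for a countable elementary substructure $K_0\prec K$, and hence is countable. Your proposal never supplies an argument for this case.

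Two smaller points. First, your identification of the ``main obstacle'' as the zero-dimensional case is misplaced: finiteness of $H^1(K,F)$ for finite $F$ over a bounded $K$ is immediate from Serre's Proposition 8 and is only used to dispose of $G/G^{0}$. Second, your observation that the forms $N_\mu$ have the same dimension as $N$, and that one must therefore be careful in the non-connected reduction, is correct and is implicitly handled in the paper: one proves countability for \emph{all} connected groups (forms of linear groups are linear, and the abelian-variety argument applies to any abelian variety), and only then applies the corollary with $N=G^{0}$. You correctly flag the need for the twisting machinery (Lemma~\ref{surjection} and Theorem~\ref{twoiso}) to control all fibres of $\pi^1$ rather than just the kernel --- this is indeed the subtlety the paper says was overlooked in \cite{Kamensky-Pillay} --- but as it stands your proof has a genuine gap at both essential base cases.
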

\begin{proof} The first point is to reduce to the case when $G$ is connected. Let $G^0$ be the connected component of $G$. The exact sequence $G^{0} \to G \to G/G^{0}$  (of algebraic groups over $K$) induces the exact sequence $H^{1}(K, G_{0}) \to H^{1}(K,G) \to H^{1}(K, G/G^{0})$. 
But the latter is finite  by \cite[Proposition 8, Chapter III]{Serrebook}.  Hence by Corollary 5.1, $H^{1}(K,G)$ is countable if and only if $H^{1}(K,G^{0})$ is countable. So we may assume that $G = G^{0}$ is connected.

We then have the exact sequence $1\to L\to G\to A \to 1$ of connected algebraic groups over $K$ where $L$ is linear and $A$ is an abelian variety.  As correctly shown in the proof in \cite[Theorem 5.2]{Kamensky-Pillay}, $H^{1}(K, A)$ is countable (as it coincides with $H^{1}(K_{0},A)$ where $K_{0}$ is a countable elementary substructure of $K$). On the other hand, it is proved in \cite{Serrebook} that $H^{1}(K, L)$ is finite. So, by Corollary 5.1, $H^{1}(K,G)$ is countable. 
\end{proof}

We now pass to differential algebraic groups over differential fields. As in \cite{LS-Pillay} we work with one derivation (so the ambient theory is DCF$_{0}$), although everything generalizes suitably to the case of a finite set $\Delta$ of commuting derivations  (and the corresponding theory DCF$_{0,m}$). In \cite[Theorem 4.1]{LS-Pillay}  we proved finiteness of the differential constrained cohomology $H^{1}_{\partial}(K,G)$ when $G$ is a linear differential algebraic group over a bounded field $K$ with $(K,\partial)$ differentially large. 

\medskip

Here we aim towards proving:

\begin{theorem}  Suppose that $K$ is  bounded (as a field) and $(K,\partial)$ is differentially large. Then, for any differential algebraic group $G$ over $K$, $H^{1}_{\partial}(K,G)$ is countable. 
\end{theorem}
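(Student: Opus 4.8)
The plan is to reduce the general differential algebraic group case to two cases where finiteness or countability is already known, using the exact sequence / twisting machinery of Corollary~5.1. First I would take a differential algebraic group $G$ over $K$ and use the structure theory of differential algebraic groups (Cassidy--Kolchin, or Pillay's work) to find a normal differential algebraic subgroup $N$ of $G$, defined over $K$, such that $N$ is linear (i.e.\ embeds in some $\mathrm{GL}_n$ as a differential algebraic group) and the quotient $G/N$ is (differential-algebraically isomorphic to) an algebraic group over $K$ --- more precisely, $G/N$ has finite-dimensional Kolchin closure and, after possibly a further reduction, can be treated via its associated algebraic group. In other words, one exploits that every differential algebraic group has a linear normal part with a quotient that is essentially a classical algebraic group object.

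Next I would handle the quotient $H^1_\partial(K, G/N)$. Since $G/N$ reduces to the algebraic-group situation over $K$, and $K$ is bounded, countability of $H^1(K, G/N)$ follows from Proposition~5.3 (the algebraic Galois cohomology statement just proved). I would need here the compatibility, noted in Example~1.3, between Kolchin's constrained cohomology over $(K,\partial)$ and algebraic Galois cohomology over $K$ when the coefficient group is an algebraic group --- this is where I should be careful, since the relevant Galois group for the differential picture is $\mathrm{Aut}(K^{\mathrm{diff}}/K)$ rather than $\mathrm{Gal}(K^{\mathrm{alg}}/K)$, and one must check the cohomology sets agree (or that the countability transfers). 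Then for the linear part, I would invoke \cite[Theorem 4.1]{LS-Pillay}, which gives \emph{finiteness} of $H^1_\partial(K, N')$ for every linear differential algebraic group $N'$ over the bounded, differentially large field $K$. Crucially, every twist $N_\mu$ of $N$ is again a linear differential algebraic group over $K$ (a $K$-form of a linear differential algebraic group is linear), so $H^1_\partial(K, N_\mu)$ is finite for all $\mu \in H^1_\partial(K,G)$.

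With these two inputs in hand --- $|H^1_\partial(K, G/N)| \le \aleph_0$ and $|H^1_\partial(K, N_\mu)| < \aleph_0 \le \aleph_0$ for all $\mu$ --- Corollary~5.1 with $\kappa = \aleph_0$ immediately yields $|H^1_\partial(K,G)| \le \aleph_0$, i.e.\ $H^1_\partial(K,G)$ is countable.

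The main obstacle I expect is the first step: producing the normal linear subgroup $N \trianglelefteq G$ over $K$ with algebraic-group quotient, and verifying that the quotient's constrained cohomology genuinely falls under Proposition~5.3. One must be careful that $N$ and $G/N$ are $K$-definable (not merely $K^{\mathrm{diff}}$-definable), that the short exact sequence is a sequence of $K$-definable groups so that Corollary~5.1 applies verbatim, and that the identification of $H^1_\partial(K, G/N)$ with the algebraic $H^1(K, \underline{G/N})$ is legitimate --- possibly this requires a further dévissage (e.g.\ peeling off the algebraic group in stages, an abelian variety part and a linear part, exactly as in the proof of Proposition~5.3). A secondary point to check is that differential largeness and boundedness of $K$ are genuinely used only through \cite[Theorem 4.1]{LS-Pillay} and Proposition~5.3, so no additional hypotheses are smuggled in.
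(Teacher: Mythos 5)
Your overall skeleton --- peel off a $K$-definable normal linear part, get finiteness of $H^{1}_{\partial}(K,N_{\mu})$ for every twist from \cite[Theorem 4.1]{LS-Pillay}, get countability for the quotient, and conclude by Corollary~5.1 with $\kappa=\aleph_{0}$ --- is indeed the paper's skeleton (apart from the preliminary reduction to connected $G$ via Lemma~5.4, which you omit but which is minor). The genuine gap is in your treatment of the quotient. The decomposition you posit does not exist in general: what the structure theory actually provides (embed $G$ into a connected algebraic group $G_{1}$, apply Chevalley--Barsotti $1\to L_{1}\to G_{1}\to A_{1}\to 1$, and set $L=G\cap L_{1}(\mathcal U)$) yields a quotient $A=G/L$ that is only a \emph{differential algebraic subgroup of an abelian variety} $A_{1}$, not an algebraic group --- think of the Manin kernel of $A_{1}$. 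So Proposition~5.2 cannot be applied to the quotient directly, and the identification of $H^{1}_{\partial}(K,G/N)$ with an algebraic $H^{1}$ that you flag as a worry is in fact not available; flagging the obstacle does not remove it.

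The missing ingredients are exactly the two the paper supplies at this point. First, the Manin map $\mu:A_{1}(\mathcal U)\to(\mathcal U^{d},+)$ of Fact~5.6 restricts to $A$ and produces a \emph{second} exact sequence $1\to \ker(\mu)\cap A\to A\to N'\to 1$ in which $\ker(\mu)\cap A$ has finite Morley rank and $N'$ is linear; the linear quotient is again handled by \cite[Theorem 4.1]{LS-Pillay}, and Corollary~5.1 is applied a second time. Second, one needs Lemma~5.5: countability of $H^{1}_{\partial}(K,H)$ for \emph{finite Morley rank} differential algebraic $H$ (and all its $K$-forms, which again have finite Morley rank), proved by realizing $H$ as the sharp points $(H_{0},s)^{\partial}$ of an algebraic $\partial$-group and using \cite[Corollary 3.3]{LS-Pillay} to bound the number of classes of differential principal homogeneous spaces by $|H^{1}(K,H_{0})|$. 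That lemma is the only place where the algebraic countability result (Proposition~5.2) genuinely enters for the non-linear part, and it is also where differential largeness is used beyond \cite[Theorem 4.1]{LS-Pillay}. Without the Manin map d\'evissage and the finite Morley rank lemma, your argument does not close.
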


We will go through a few lemmas. For us, $G$ will denote a differential algebraic group over a differential field $(K,\partial)$; equivalently, a group definable over $K$  in a differentially closed field $(\mathcal U,\partial)$ containing $K$.  There is no harm in taking $\mathcal U$ to be a differential closure $K^{diff}$ of $K$.

\begin{lemma}  Suppose $G$ is finite and $K$ is bounded (as a field). Then $H^{1}_{\partial}(K,G)$ is finite. 
\end{lemma}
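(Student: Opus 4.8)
The plan is to reduce the statement about finite $G$ to a purely field-theoretic boundedness fact. Since $G$ is a finite differential algebraic group over $K$, it is in fact a finite group in the usual sense — a finite set of points, each of which is differentially algebraic over $K$ — so all its elements lie in the (field-theoretic, hence also differential) algebraic closure of $K$, and the constrained cohomology $H^1_\partial(K,G)$ should be computed inside a suitable Galois-type group. Concretely, I would first observe that a cocycle $\Phi \in Z^1_\partial(\mathcal G, G)$ is, by definition, of the form $\Phi(\sigma) = h(a,\sigma(a))$ for a finite tuple $a$; since the values of $\Phi$ lie in the finite set $G \subseteq \operatorname{dcl}(\overline{a})$ and $G$ itself is defined over $K$, I can arrange that $a$ enumerates a finite set and that $\Phi$ factors through the action of $\mathcal G$ on a finite $K$-definable set. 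Thus $\Phi$ really descends to a cocycle on a finite quotient of $\mathcal G$.

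Next I would make the Galois-theoretic identification precise. Let $K_0$ be the differential field generated over $K$ by $G$ together with the finitely many tuples needed to witness definability of the (finitely many, up to the relevant equivalence) cocycles involved; since $G$ is finite, $K_0$ is a finite extension of $K$ in the differential sense, and because $K$ is bounded as a field (Serre's property (F)), $K$ has only finitely many field extensions of each degree. The key point is that any finite differential field extension of $K$ is in particular a finite field extension of $K$, so there are only finitely many of them; consequently there are only finitely many possibilities for $K_0$ and the relevant finite quotient $\operatorname{Gal}(K_0/K)$ (or the appropriate constrained/differential analogue) through which our cocycles factor. A cocycle from a finite group into a finite group takes only finitely many values, so there are only finitely many cocycles in total, and a fortiori finitely many cohomology classes.

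Assembling this: I would argue that $Z^1_\partial(\mathcal G, G)$ is a finite set, hence $H^1_\partial(\mathcal G, G) = H^1_\partial(K,G)$ is finite. The main obstacle I anticipate is making rigorous the claim that everything factors through a \emph{finite} piece of $\mathcal G$ in a way that is uniform — i.e. controlling not just a single cocycle but the whole family $Z^1_\partial(\mathcal G,G)$ at once. One has to check that the tuple $a$ witnessing definability of a cocycle can always be taken from a \emph{fixed} finite $K$-definable set (not depending on the cocycle), which follows because a definable cocycle into a finite $A$-definable group $G$ has image in $\operatorname{dcl}(K)$-many possible values and is determined by a finite amount of data bounded in terms of $G$; combined with boundedness of $K$ this caps the number of relevant finite Galois quotients. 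Once that uniformity is in hand, finiteness of $Z^1_\partial(\mathcal G,G)$, and hence of $H^1_\partial(K,G)$, is immediate, and this lemma then feeds into Corollary 5.1 together with the linear case from \cite[Theorem 4.1]{LS-Pillay} and the abelian-variety case to yield the theorem.
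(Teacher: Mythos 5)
Your strategy---show that every definable cocycle factors through a finite quotient of $\mathcal{G}$ and then count the possibilities---is essentially an attempt to re-derive Serre's finiteness theorem for fields with property (F) inside the differential setting, and as written it has a genuine gap at exactly the point you flag as ``the main obstacle''. Boundedness of $K$ gives finitely many extensions of each fixed degree $n$, not finitely many finite extensions altogether; so your count of the possible fields $K_0$ (equivalently, of the finite quotients of $\mathcal{G}$ through which cocycles factor) only works once you have a \emph{uniform} bound, depending only on $G$, on the index of the subgroup $H_\Phi=\{\sigma:\Phi(\sigma)=1 \text{ and } \sigma \text{ fixes } G \text{ pointwise}\}$ on whose left cosets $\Phi$ is constant. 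Your justification of this uniformity (``determined by a finite amount of data bounded in terms of $G$'') is the assertion to be proved, not an argument for it. Such a bound does exist---the map $\sigma\mapsto(\Phi(\sigma),\sigma|_G)$ is injective on left cosets of $H_\Phi$, so $[\mathcal{G}:H_\Phi]\le |G|\cdot |G|^{|G|}$---but even with it you must still show that $\mathcal{G}=\operatorname{Aut}(K^{\mathrm{diff}}/K)$ has only finitely many open subgroups of each finite index, i.e.\ that these finite quotients all arise from finite subextensions of $K^{\mathrm{alg}}/K$. That descent step (every definable cocycle into the finite group $G\subseteq K^{\mathrm{alg}}$ restricts trivially to $\operatorname{Aut}(K^{\mathrm{diff}}/K^{\mathrm{alg}})$, which uses that $\operatorname{acl}=\operatorname{dcl}$ over the algebraically closed differential field $K^{\mathrm{alg}}$) is asserted in your second sentence (``I can arrange that \dots'') but never proved, and it is where the real content lies.

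The paper sidesteps all of this with a two-line reduction: since $G$ is finite, its points lie in $K^{\mathrm{alg}}$, so $H^1_{\partial}(K,G)$ is identified with the ordinary Galois cohomology $H^1(\operatorname{Gal}(K^{\mathrm{alg}}/K),G)$, which is finite for bounded $K$ by Proposition 8 of \cite{Serrebook}; alternatively one views $G$ as an algebraic group and quotes Lemma 2.6(1) of \cite{LS-Pillay}. To repair your argument you would either have to supply the two missing steps above (in effect reproving Serre's Proposition 8), or simply make this reduction and cite the classical result.
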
 
\begin{proof}  We could consider $G$ as an algebraic group and use Lemma 2.6(1) from \cite{LS-Pillay}  which says that for $G$ definable over $K$ in the field language, $H^{1}(K,G) = 
H^{1}_{\partial}(K,G)$ and the latter is finite by Proposition 8 of \cite{Serrebook}. 

Alternatively, note that the finitely many points  of $G$ are in $K^{alg}$ so really $H^{1}_{\partial}(K,G) = H^{1}(Gal(K^{alg}/K), G)$ which is finite by Proposition 8 of \cite{Serrebook} again. 
\end{proof}

By the above lemma, together with Corollary 5.1, we may assume that $G$ is {\em connected} as a differential algebraic group, namely has no proper definable subgroup of finite index. 

\begin{lemma} Suppose that $G$ has finite Morley rank, $K$ is bounded as a field, and $(K,\partial)$ is differentially large. Then, $H^{1}_{\partial}(K,G)$ is countable. 
\end{lemma}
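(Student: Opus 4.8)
The plan is to reduce to the case of connected $G$ and then induct on the Morley rank of $G$, using the structure theory of finite Morley rank differential algebraic groups in $\mathrm{DCF}_0$ together with Corollary~5.1 as the engine of the induction. As noted above, the finite case (the previous lemma) and Corollary~5.1 already let us assume $G$ is connected; and for a (possibly non-connected) form $N_{\mu}$ of a subgroup that arises later in the argument, the same two ingredients reduce us to $(N_{\mu})^{0}$, which is again a connected finite Morley rank differential algebraic group over $K$.

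So assume $G$ connected and induct on $\mathrm{RM}(G)$, the case $\mathrm{RM}(G)=0$ being trivial ($G=1$). Suppose first that $G$ has a proper, nontrivial, connected, $K$-definable normal subgroup $N$. Then $[G:N]$ and $|N|$ are both infinite, so $\mathrm{RM}(N)<\mathrm{RM}(G)$ and $\mathrm{RM}(G/N)<\mathrm{RM}(G)$. Applying Corollary~5.1 to $N$: the factor $H^{1}_{\partial}(K,G/N)$ is countable by the inductive hypothesis (applied to $G/N$, connected of smaller rank), and for each $\mu\in H^{1}_{\partial}(K,G)$ the form $N_{\mu}$ is a finite Morley rank differential algebraic group over $K$ with $\mathrm{RM}(N_{\mu})=\mathrm{RM}(N)<\mathrm{RM}(G)$, so --- after passing to $(N_{\mu})^{0}$ as above --- $H^{1}_{\partial}(K,N_{\mu})$ is countable by the inductive hypothesis; hence $H^{1}_{\partial}(K,G)$ is countable. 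The base case is thus when $G$ has no proper nontrivial connected $K$-definable normal subgroup. Here I would invoke the trichotomy / structure theorem for finite Morley rank groups in $\mathrm{DCF}_0$ (see the introductions of \cite{LS-Pillay} and \cite{Pillay-PV} and the references there): modulo finite central kernels (absorbed by Corollary~5.1 and the finite case as usual), such a $G$ is, over $K$, either a form of a simple algebraic group over the constants --- hence a linear differential algebraic group over $K$, whose cohomology $H^{1}_{\partial}(K,G)$ is in fact \emph{finite} by \cite[Theorem 4.1]{LS-Pillay} --- or the Manin kernel $A^{\sharp}$ of a simple abelian variety $A$ over $K$.

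It remains to treat $H^{1}_{\partial}(K,A^{\sharp})$, and this is the step I expect to be the crux. I would use the short exact sequence of differential algebraic groups over $K$
$$1\to A^{\sharp}\to A\xrightarrow{\ell\delta}\mathbb{G}_a^n\to 1,\qquad n=\dim A,$$
where $\ell\delta$ is the logarithmic derivative, whose kernel is exactly $A^{\sharp}$. The point where differential largeness enters is the claim that $\ell\delta\colon A(K)\to K^n$ is surjective: given $v\in K^n$, a point of the fibre $\ell\delta^{-1}(v)$ generic over $K$ is Zariski-generic in $A$ and generates a differential field extension of $K$ whose underlying field extension is (birationally) the function field of $A$, a variety with a smooth $K$-point; so $K$ is existentially closed in it as a field, hence --- by differential largeness --- also as a differential field, which forces a point of $\ell\delta^{-1}(v)$ in $A(K)$. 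Given this, exactness of the cohomology sequence of Theorem~\ref{cohoseq} (with the roles of $N,G,G/N$ played by $A^{\sharp},A,\mathbb{G}_a^n$) shows that the connecting map $H^0_{\partial}(K,\mathbb{G}_a^n)\to H^1_{\partial}(K,A^{\sharp})$ is trivial, hence that $H^{1}_{\partial}(K,A^{\sharp})$ embeds into $H^{1}_{\partial}(K,A)$. Finally $A$, being an algebraic group (definable in the field language), satisfies $H^{1}_{\partial}(K,A)=H^{1}(K,A)$ by \cite[Lemma 2.6(1)]{LS-Pillay}, and $H^{1}(K,A)$ is countable because it is torsion (finite Galois cohomology is killed by the degree of the extension) and, $K$ being bounded, each of its $n$-torsion subgroups is a quotient of the finite group $H^{1}(K,A[n])$ --- exactly the argument recalled in the proof of the proposition above (cf. \cite[Theorem 5.2]{Kamensky-Pillay}). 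Thus $H^{1}_{\partial}(K,A^{\sharp})$ is countable, closing the induction. The main obstacles are this Manin-kernel computation --- in particular deducing the surjectivity of $\ell\delta$ on $K$-points from the definition of differential largeness --- together with the bookkeeping needed to ensure that every auxiliary group produced by the structure theory and by Corollary~5.1 is again a finite Morley rank differential algebraic group over $K$, so that the induction actually closes.
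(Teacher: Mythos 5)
Your proof takes a completely different route from the paper's, and it has genuine gaps. The paper's argument for this lemma is short and direct: a connected finite Morley rank $G$ over $K$ is, over $K$, the group of sharp points $(H,s)^{\partial}$ of a connected algebraic $\partial$-group $(H,s)$ over $K$; every $K$-definable PHS for $G$ is of the form $(X,s_X)^{\partial}$ for an algebraic PHS $X$ for $H$ over $K$ with compatible section; and Corollary 3.3 of \cite{LS-Pillay} (this is where differential largeness enters) says that if the underlying algebraic torsors are $K$-isomorphic then so are the differential ones. Hence $H^1_{\partial}(K,G)$ injects into $H^1(K,H)$, which is countable by boundedness (Proposition 5.2). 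No induction, no structure theory, no Manin kernels.

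The most serious gap in your base case is the short exact sequence $1\to A^{\sharp}\to A\xrightarrow{\ell\delta}\mathbb{G}_a^{n}\to 1$. For a simple abelian variety $A$ that does not descend to the constants --- which is precisely the case in which $A^{\sharp}$ is a modular Manin kernel and cannot be disposed of by the linear case --- the prolongation $\tau A\to A$ admits \emph{no} regular section over $K$ (its class in $H^1(A,\mathcal{O}_A)^{n}$ is the Kodaira--Spencer class, which vanishes exactly when $A$ descends to the constants). So there is no logarithmic derivative $A\to\mathbb{G}_a^{n}$ with kernel $A^{\sharp}$; the actual Manin map (Fact 5.6) is built from the universal vectorial extension, and its kernel is a finite Morley rank group that in general properly contains $A^{\sharp}$. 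Your surjectivity-via-largeness argument also depends on the nonexistent section: it is only because a fibre would be cut out by a first-order equation $\partial a = s_v(a)$ with $s_v$ regular on $A$ that a generic point of the fibre would generate a differential field with underlying field $K(A)$. A second gap is the bookkeeping you flag at the end but do not resolve: Corollary 5.1 requires countability of $H^1_{\partial}(K,N_{\mu})$ for \emph{every} $K$-form $N_{\mu}$ of $N$, and when the base case produces a Manin kernel $A^{\sharp}$ it is not clear that a $K$-form of $A^{\sharp}$ is again presented as $B^{\sharp}$ for an abelian variety $B$ over $K$ (nor that a form of a minimal commutative group nonorthogonal to the constants lands back in your dichotomy, which as stated omits the commutative linear cases such as forms of $\mathbb{G}_a(\mathcal{C})$ and $\mathbb{G}_m(\mathcal{C})$, as these are not simple algebraic groups). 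Both problems evaporate in the paper's approach, since the reduction to algebraic $\partial$-groups and their torsors applies uniformly to all connected finite Morley rank groups over $K$ at once.
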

\begin{proof}  This is an adaptation of Case 1 of the proof of Theorem 4.1 of \cite{LS-Pillay}. We give some details.  First (cf. Remark 2.4(1)  of \cite{LS-Pillay}), $G$ is definably over $K$ isomorphic to  the ``sharp points" or ``$\partial$-points" of a connected algebraic $\partial$-group $(H,s)$ over $K$.  By a (connected) algebraic $\partial$-group $(H,s)$ over $K$ we mean a connected algebraic group $H$ over $K$ equipped with an extension of $\partial$ to a derivation of the structure sheaf of $H$; equivalently, a regular homomorphic section $s$ (over $K$) of  the surjective homomorphism   $\tau(G) \to G$ of algebraic groups over $K$, $\tau$ being the first prolongation of $G$, which is a connected algebraic group over $K$ as well as a torsor for the tangent bundle $T(G)$ of $G$.  The group $(H,s)^{\partial}$ is  the definable (in the differential field $K^{diff}$) over $K$ group $\{a\in H(K): \partial(a) = s(a)\}$. 
So we are assuming that $G = (H,s)^{\partial}$. 
Any definable over $K$ principal homogeneous space ($PHS$) for $G$ is of the form $(X,s_{X})^{\partial}$ for some (algebraic) principal homogeneous space $X$ over $K$  for $H$, and  
regular section $s_{X}$ over $K$ for $\tau(X)\to X$ such that for all $h\in H$ and $x\in X$, $s_{X}(h\cdot x) =  s(h)\cdot s_{X}(x)$.  (See Remark 2.2 and 2.3 of \cite{LS-Pillay}). One of 
the main points of \cite[\S3]{LS-Pillay} is Corollary 3.3 there which says that: given also $(Y, s_{Y})$, if $X$ and $Y$ are isomorphic over $K$ as algebraic $PHS$'s for $H$, then $(X,s_{X})^{\partial}$ and $
(Y,s_{Y})^{\partial}$ are isomorphic over $K$ as definable (differential algebraic) $PHS$'s for  $G = (H,s)^{\partial}$.  

It follows that the cardinality of $H^{1}_{\partial}(K,G)$ is at most the cardinality of  $H^{1}(K,H)$ which by Proposition 5.2 is countable as $K$ is bounded.
\end{proof}

Before completing the proof of Theorem 5.3   let us recall the ``Manin maps"  and their properties (with precise references):
\begin{fact} Let $A$ be an abelian variety over $K$. Then there is a definable (over $K$) surjective homomorphism $\mu$ from $A({\mathcal U})$ to $({\mathcal U},+)^{d}$, where $d$ is the dimension of $A$ as an algebraic group, such that $\ker(\mu)$  has finite Morley rank (and is connected). 
\end{fact}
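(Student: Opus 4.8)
The map $\mu$ is the classical \emph{Manin homomorphism} of $A$, and I would establish the Fact by recalling its construction from the universal vectorial extension of $A$. The plan is as follows. First, recall the universal vectorial extension $0\to W\to \hat A \xrightarrow{\rho} A\to 0$ over $K$, where $W$ is a vector group of dimension $d=\dim A$. Its universal property equips $\hat A$ with a \emph{canonical} structure of algebraic $\partial$-group over $K$ --- concretely, a homomorphic section $s\colon \hat A\to \tau(\hat A)$ over $K$ of the first prolongation --- the point being that the obstruction to lifting $\partial$ to a connection vanishes after passing from $A$ to $\hat A$. That $\hat A$ carries such a $\partial$-structure, and that it is unique, is classical (going back to Manin, reworked by Buium; see \cite{Pillay-PV} and the references there).

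Next, I would form the logarithmic derivative $\partial_s\colon \hat A(\mathcal U)\to \operatorname{Lie}(\hat A)(\mathcal U)$ sending $x$ to $\nabla(x)\cdot s(x)^{-1}$: the prolongation $\nabla(x)$ of the point $x$ and $s(x)$ both lie in the fibre of $\tau(\hat A)$ over $x$, so their ``ratio'' lies in the fibre over the identity, i.e.\ the Lie algebra $\operatorname{Lie}(\hat A)(\mathcal U)\cong(\mathcal U,+)^{2d}$. This is a $K$-definable group homomorphism; its kernel is $\hat A^\sharp:=(\hat A,s)^\partial$, a $K$-definable subgroup of $\hat A(\mathcal U)$ which has finite Morley rank and is connected (the standard facts about $(H,s)^\partial$ for a connected algebraic $\partial$-group). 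Moreover $\partial_s$ is surjective because $(\mathcal U,\partial)$ is differentially closed: for each $\ell$ the equation $\nabla(x)=\ell\cdot s(x)$ is, locally on $\hat A$, a system of the form $\partial x=F(x)$ with $F$ rational over $K$ and $\ell$, hence has a solution in $\mathcal U$.

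Then I would compose $\partial_s$ with the linear projection $\operatorname{Lie}(\hat A)\to \operatorname{Lie}(\hat A)/\operatorname{Lie}(W)=\operatorname{Lie}(A)$, the target being $\cong(\mathcal U,+)^d$. Since $W$ is normal in $\hat A$, the section $s$ restricts to the canonical $\partial$-structure on $W$, and a short computation shows that $\partial_s$ carries $W(\mathcal U)$ into $\operatorname{Lie}(W)$; hence the composite $\hat A(\mathcal U)\to(\mathcal U,+)^d$ kills $W(\mathcal U)$ and factors through $\rho$ as a $K$-definable homomorphism $\mu\colon A(\mathcal U)\to(\mathcal U,+)^d$. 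Surjectivity of $\mu$ then follows from surjectivity of $\partial_s$ together with surjectivity of $\rho$ on $\mathcal U$-points (as $\mathcal U\models\DCF_0$ and $\rho$ is a surjective homomorphism of algebraic groups). Finally, $\ker(\mu)=\rho(\hat A^\sharp)$ is the image of the finite-Morley-rank connected group $\hat A^\sharp$, hence is itself of finite Morley rank and connected, as required.

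The main obstacle is not any single computation but marshalling the classical inputs correctly: the existence, canonicity and $K$-rationality of the $\partial$-group structure on $\hat A$; the finiteness of Morley rank and the connectedness of $(\hat A,s)^\partial$; and the differential-closedness argument for surjectivity of the logarithmic derivative. Since all of these are well known, in the paper this is best recorded as a Fact with precise references rather than reproved; the sketch above is only meant to indicate why each clause holds.
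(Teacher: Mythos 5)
Your construction is correct in substance but follows a genuinely different route from the paper. The paper does not touch the universal vectorial extension at all: it writes $A$ as an almost direct product of simple abelian varieties $A_1,\dots,A_m$ over $K$, invokes Fact 1.7(ii) of \cite{DGTIII} to obtain, for each simple factor, a $K$-definable surjective $\mu_i\colon A_i(\mathcal U)\to(\mathcal U^{d_i},+)$ with connected kernel of finite Morley rank, and then notes that $\oplus_i\mu_i$ vanishes on the finite (hence torsion) kernel of the isogeny $\oplus_i A_i\to A$ because $(\mathcal U^d,+)$ is torsion-free, so it descends to the required $\mu$ on $A$. Your argument via the canonical $D$-group structure on $\hat A$ and the logarithmic derivative is essentially the content of the cited fact, unwound; it buys a uniform construction (no reduction to simple factors, no isogeny bookkeeping) at the price of importing the Manin--Buium machinery that the paper deliberately leaves inside a reference. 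One step you should make explicit: the identity $\ker(\mu)=\rho(\hat A^{\sharp})$ requires an argument for the inclusion $\subseteq$. Given $x\in\hat A(\mathcal U)$ with $\partial_s(x)\in\operatorname{Lie}(W)(\mathcal U)$, you need to produce $w\in W(\mathcal U)$ with $\partial_s(w)=\partial_s(x)$; this follows from surjectivity of the logarithmic derivative of the $D$-group $(W,s|_W)$ over the differentially closed field $\mathcal U$, after which $xw^{-1}\in\hat A^{\sharp}$ and $\rho(xw^{-1})=\rho(x)$. With that line added, your proof is complete and the remaining ingredients (existence and $K$-rationality of the $D$-structure on $\hat A$, finite Morley rank and connectedness of $\hat A^{\sharp}$) are indeed standard facts that may reasonably be cited.
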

\begin{proof} $A$ is an almost direct product of simple abelian varieties $A_{1},\dots,A_{m}$ over $K$, where simple means having no proper infinite algebraic subgroups.  For each $i$, there is by Fact 1.7 (ii) of \cite{DGTIII} a definable (over $K$) surjective homomorphism $\mu_{i}$ from $A_{i}({\mathcal U})$ to $({\mathcal U}^{d_{i}}, +)$ with $\ker(\mu_i)$ connected of finite Morley rank, and where $d_{i} = dim(A_{i})$.  Then the $\mu_{i}$'s induce  the required $\mu: A({\mathcal U}) \to  ({\mathcal U}^{d}, +)$. (Namely we have $\oplus_{i}\mu_{i}: \oplus_{i}A_{i} \to {\mathcal U}^{d}$, which is $0$ on the torsion elements, so induces $\mu:A\to {\mathcal U}^{d}$.) 
\end{proof}

We finish with the proof of Theorem 5.3.

\begin{proof}[Proof of Theorem 5.3]
Let $G$ be our connected definable (over $K$) group. By Corollary 4.2 of \cite{Pillay-groups}, we may assume that $G$ is a subgroup of 
$G_{1}({\mathcal U})$ for some connected algebraic group $G_{1}$ over $K$.  The Chevalley-Barsotti theorem mentioned earlier  gives an exact sequence $1\to L_1 \to G_{1} \to A_1 \to 1$ of
connected algebraic groups over $K$ where $L_{1}$ is linear and $A_{1}$ is an abelian variety.   Let $L = G\cap L_{1}$. So we obtain an exact sequence $1\to L\to G \to A \to 1$ of differential algebraic groups over $K$, where $L = G\cap L_{1}(\mathcal U)$ and  $A\subseteq A_{1}({\mathcal U})$.  Theorem 4.1 of \cite{LS-Pillay} says that $H^{1}_{\partial}(K,L')$ is finite for any $K$-form $L'$ of $L$  (as $L'$ will also be linear). 
So, by Corollary 5.1, it suffices to prove that  $H^{1}_{\partial}(K, A)$ is countable. Let $\mu: A_{1}({\mathcal U}) \to ({\mathcal U}^{d},+)$ be the Manin map given by  Fact~5.6. Then, restricting $\mu$ to $A$  gives an exact sequence of definable over $K$ groups 
$$1 \to  Ker(\mu)\cap A  \to A  \to  N \to 1$$  
where  $Ker(\mu)\cap A$ has finite Morley rank and  $N$ is linear.

By Lemma 5.5 (and 5.4),  $H^{1}_{\partial}(K, Ker(\mu)\cap A))$ is countable, and the same holds also for any  $K$-form of  $Ker(\mu)\cap A$ (as it also has finite Morley rank). 
 By Theorem~4.1 of \cite{LS-Pillay}, $H^{1}_{\partial}(K,N)$ is finite.   Hence, by Corollary 5.1, $H^{1}_{\partial}(K,A)$ is countable.  
\end{proof}

\bibliographystyle{plain}

\begin{thebibliography}{10}

\bibitem{Kamensky-Pillay}
M. Kamensky and A. Pillay.
\newblock  Interpretations and differential Galois extensions.
\newblock International Math. Research Notices, 7390--7413, 2016.

\bibitem{Kolchinbook2}
E. Kolchin.
\newblock Differential Algebraic Groups.
\newblock Academic Press, 1985.

\bibitem{LS-Pillay}
O. Le\'on S\'anchez and A. Pillay.
\newblock Differential Galois cohomology and parameterized Picard-Vessiot extensions.
\newblock Communications in Contemporary Mathematics. To appear. Available as Online Ready.

\bibitem{DGTIII}
D. Marker and A. Pillay,
\newblock Differential Galois Theory III: some inverse problems.
\newblock Illinois J. Math., 41 (1997), 453 - 461.


\bibitem{Pillay-cohomology}
A. Pillay.
\newblock Remarks on Galois cohomology and definability.
\newblock The Journal of Symbolic Logic. 62(2):487--492, 1997.

\bibitem{Pillay-groups}
A. Pillay.
\newblock Some foundational questions concerning definable groups. 
\newblock Pacific Journal of Mathematics. 179(1):179--200, 1997.

\bibitem{Pillay-PV}
A. Pillay
\newblock The Picard-Vessiot theory, contrained cohomology, and linear differential algebraic groups.
\newblock   Journal de Math\'ematiques Pures et Appliqu\'ees. 108 (2017), 809 - 817.


\bibitem{PR}
V. Platonov and A. Rapinchuk
\newblock Algebraic Groups and Number Theory.
\newblock Academic Press,  San Diego, 1994.

\bibitem{Poizat-groupes-stables}
B. Poizat.
\newblock Groupes Stables.
\newblock  Nur al-mantiq wal-ma'rifah. Villeurabanne, France, 1987.


\bibitem{Serrebook}
J. P. Serre.
\newblock Galois cohomology.
\newblock Springer Monographs in Mathematics, 1997.

\bibitem{SerreBour}
J. P. Serre.
\newblock Groupes de congruence
\newblock S\'eminaire Bourbaki (1966-19671), New York, 1968.


\end{thebibliography}

\end{document}